\documentclass[11pt]{article}
\usepackage{amsthm,amsmath,amssymb,amsfonts,mathtools}
\usepackage{geometry}
\usepackage[english]{babel}
\usepackage{microtype}
\usepackage{enumerate}
\usepackage[shortlabels]{enumitem}
\usepackage[dvipsnames]{xcolor}
\usepackage{hyperref}
\usepackage[capitalise]{cleveref}

\newtheorem{theorem}{Theorem}[section]
\newtheorem{proposition}[theorem]{Proposition}
\newtheorem{lemma}[theorem]{Lemma}
\newtheorem{claim}[theorem]{Claim}
\newtheorem{conjecture}[theorem]{Conjecture}
\numberwithin{equation}{section}

\newcommand{\ch}{\operatorname{ch}}
\newcommand{\Prob}{\mathbb P}
\newcommand{\E}{\mathbb E}

\title{The Choice Number of Random Graphs}
\author{
Lanchao Wang\thanks{School of Mathematics, Nanjing University, Nanjing, China, and ECOPRO, Institute for Basic Science, Daejeon, Korea. Email: lanchaowang@foxmail.com.}
\and
Jiasheng Zeng\thanks{Department of Mathematics, The Hong Kong University of Science and Technology, Hong Kong, China. Email: jzengbl@connect.ust.hk}
}
\date{}
\begin{document}
\maketitle

\begin{abstract}
In 1999, Alon, Krivelevich and Sudakov conjectured
that the choice number and chromatic number of
the random graph $G(n,p)$ are asymptotically equal
whenever $np\to\infty$.
We prove their conjecture, showing that, with high
probability,
$$
\ch(G(n,p))=(1+o(1))\chi(G(n,p))
$$
whenever $np\to\infty$.
This improves the bound
$\ch(G(n,p))\le(2+o(1))\chi(G(n,p))$
for $np\to\infty$ and $p\le0.9$
obtained by Krivelevich and Vu in 2001.

Our proof distinguishes two cases according to
the size of independent sets.
When independent sets of growing size are available,
we establish a sharper uniform independent-set
estimate that eliminates the factor-two loss in
the bound of Krivelevich and Vu.
When independent sets have bounded size, we
establish a deterministic result showing that,
under a suitable uniform independent-set condition,
a proper colouring with bounded colour-class sizes
yields an asymptotically matching upper bound
on the choice number.

\end{abstract}

\section{Introduction}\label{sec:main}

List colouring is a natural strengthening of ordinary graph
colouring. For a graph $G$, the chromatic number $\chi(G)$ is
the minimum number of colours needed to properly colour its
vertices. In list colouring, each vertex $v$ is assigned a
set $L(v)$ of available colours. A proper list colouring
is a proper vertex colouring $f$ satisfying
$f(v)\in L(v)$ for every $v\in V(G)$.
The \emph{choice number} $\ch(G)$ is the least integer $k$
such that $G$ admits a proper list colouring for every
assignment of lists satisfying $|L(v)|=k$ for all
$v\in V(G)$.
 Clearly, $\chi(G)\le\ch(G)$. However, these two parameters can differ significantly.
For example, Erd\H{o}s, Rubin and Taylor~\cite{ERT1980}
showed that the complete bipartite graph $K_{n,n}$
satisfies $\ch(K_{n,n})=(1+o(1))\log_2 n$.

In this paper, we study the relationship between the
chromatic number and the choice number of the random
graph $G(n,p)$. Erd\H{o}s, Rubin and Taylor~\cite{ERT1980}
asked whether $\ch(G(n,1/2))=o(n)$ with high probability.
This was answered affirmatively by Alon~\cite{Alon1992}.
Later, Alon, Krivelevich and Sudakov~\cite{AKS}
established the correct order of magnitude of the
choice number, proving that
$\ch(G(n,p))=\Theta(np/\log(np))$
with high probability whenever $2<np\le n/2$.
They further conjectured that the choice number
and the chromatic number are asymptotically equal
whenever $np\to\infty$.

\begin{conjecture}[\cite{AKS}]
Let $p=p(n)\in(0,1)$ satisfy $np\to\infty$.
Then, with high probability,
\[
\ch(G(n,p))=(1+o(1))\chi(G(n,p)).
\]
\end{conjecture}

The conjecture was already known for $p=1/2$ by
a result of Kahn; see Alon~\cite[Proposition~4.4]{Alon1993}.
As noted by Krivelevich~\cite{Krivelevich2000},
the same argument applies to every constant
$p\in(0,1)$. For varying $p$, Krivelevich and
Vu~\cite{KV} proved that
$\ch(G(n,p))\le(2+o(1))\chi(G(n,p))$
with high probability whenever $np\to\infty$
and $p\le0.9$.

Several works established the conjecture in increasingly
wide ranges of $p$. For every fixed $\delta>0$,
Krivelevich~\cite{Krivelevich2000} proved the conjecture
for $n^{-1/4+\delta}\le p\le3/4$, and
Krivelevich, Sudakov, Vu and Wormald~\cite{KSVW2003}
extended this range to
$n^{-1/3+\delta}\le p\le3/4$.
Frieze, Mitsche, P\'erez-Gim\'enez and
Pra{\l}at~\cite{FMPP2015} further extended it to
$
 n^{-1/3}(\log n)^2(\log\log n)^{1/3}
 \ll p\le1-\delta.
$
However, the conjecture remained open in its full
generality, particularly when $np$ grows arbitrarily
slowly or when $p$ approaches one.
We resolve this conjecture in its full generality.

\begin{theorem}\label{thm:main}
Let $p=p(n)\in(0,1)$ satisfy $np\to\infty$.
Then, with high probability,
\[
\ch(G(n,p))=(1+o(1))\chi(G(n,p)).
\]
\end{theorem}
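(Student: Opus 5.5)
Since $\chi\le\ch$ always, only the upper bound $\ch(G(n,p))\le(1+o(1))\chi(G(n,p))$ is needed, and I would split into two regimes according to the independence number, as the abstract suggests. Write $G=G(n,p)$, $d=np$, $q=1-p$. Let $k_0$ be the typical size of a maximum independent set: roughly $2\log_b(np)/p$-type, i.e.\ $\alpha(G)\sim 2\log_{1/q}(np)$. Regime~(A) is $\alpha(G)\to\infty$, which covers all $p\le 1-\varepsilon_n$ with $\log_{1/q} n\to\infty$. Regime~(B) is $\alpha(G)$ bounded, which happens when $p\to1$ fast, so that $q\le n^{-c}$.

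\textbf{Regime (A).} I would follow the Kahn/Krivelevich--Vu strategy. First prove a uniform independent-set lemma: with high probability every vertex subset $U$ with $|U|\ge n/\omega$ (with $\omega\to\infty$ slowly) contains an independent set of size $(1-o(1))k_0$, where $k_0$ is the size attained in all of $G$. Krivelevich--Vu only obtained about $k_0/2$ uniformly, which is the source of their factor two. The sharpening would come from Janson's inequality applied to the count of independent $k$-sets inside $U$. The failure probability is $\exp(-\Omega(\mu^2/\Delta))$, and with $k=(1-\varepsilon)k_0$ the expected count is polynomially large, $\mu\approx (n/k)^{\varepsilon k}$-ish. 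This gives a tail of $\exp(-|U|^{2-o(1)}/\text{poly})$, beating the union bound over the at most $2^n$ sets $U$. For sparse $p$, where $d\to\infty$ slowly, Janson alone is too weak, and I would combine it with Talagrand/martingale concentration on the maximum number of disjoint independent sets.

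Given the lemma, the list colouring proceeds in two stages. First, repeatedly take a colour $c$ and the set $U_c$ of uncoloured vertices having $c$ in their list. If $U_c$ is large, colour a near-maximum independent subset of it with $c$; a random-greedy version of this handles lists of size $(1+\varepsilon)n/k_0\approx(1+\varepsilon)\chi$. Second, finish the leftover $o(n)$ vertices using spare colours and the degeneracy/maximum-degree bound of the small leftover graph. Alternatively, I would use Alon's random-subset-of-colours argument: choose each colour with probability $1/\chi$-scale so that each vertex retains few colours.

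\textbf{Regime (B).} Here $\alpha(G)\le s$ for a constant $s$, and $\chi\sim n/s'$ for an explicit $s'$. I would prove a deterministic statement of the following shape. Suppose $G$ has a proper colouring with classes of size at most $s$ using $m$ colours, and every set of $\ge\varepsilon n$ vertices contains a large number of independent $s$-sets (a uniform independent-set condition). Then $\ch(G)\le(1+O(\varepsilon))m$. The proof idea is to view the colour classes as a hypergraph matching/partition and to use a nibble or Hall-type argument. Each class $C$ gives vertices whose lists overlap in many colours, since a list has size $(1+\varepsilon)m$ among at most $|\bigcup L|$ colours. One assigns one colour per class, which is a system of distinct representatives for the family $\{\bigcap_{v\in C}L(v)\}$, falling back on fewer-vertex classes when intersections are empty. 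The uniform condition is used to re-partition leftover vertices into new near-maximum independent sets. The randomness of $G$ is then only needed to verify the uniform condition and to supply an optimal-size colouring with bounded classes, both of which follow from first/second-moment counting of $s$-cliques in the complement $G(n,q)$.

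\textbf{Main obstacle.} I expect the main difficulty to be the sharpened uniform independent-set estimate in regime~(A) when $np\to\infty$ very slowly. There, independent sets have size about $2\log(np)/p$, comparable to $n$ scales, and Janson's $\Delta$ term dominates. I would first try to handle that term by restricting to independent sets that are ``spread'', with no large intersection with each other, so as to control $\Delta$.
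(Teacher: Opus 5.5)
Your two-regime split and your use of the Krivelevich--Vu two-stage scheme in regime~(A) match the paper. However, the decisive step in each regime is missing.

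\textbf{Regime (A).} The uniform independent-set estimate you single out cannot be obtained from Janson's inequality plus martingale or Talagrand bounds in the sparse range, which is where the conjecture had remained open.

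\emph{Janson is too weak.} Since $(1-p)^{-\binom j2}\ge1$, one always has $\Delta/\mu^2\ge\Prob(|A\cap B|\ge2)$ for two uniformly random $r$-subsets $A,B$ of $U$, spread or not. For $|U|=t$ this probability is $\gtrsim\min\{1,r^4/t^2\}$. With $r\approx2\log d/p$, Janson therefore gives at best $\exp(-O(1+t^2p^4/(\log d)^4))$. That is useless for $p\lesssim n^{-1/2}$. It also cannot beat the union bound over $\binom nt=e^{\Theta(t\log\omega)}$ sets unless $p$ is above roughly $n^{-1/4}$.

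\emph{Your proposed fixes do not help.} A fixed family of $r$-sets with pairwise intersections of size at most one has fewer than $\binom t2/\binom r2$ members, so its expected number of independent members is $o(1)$. The martingale and Talagrand bounds you mention give, in this range, exponents at most linear in $t$.

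\emph{The missing idea is a boosting step.} The paper first proves $\Prob(\alpha(G(m,p))<r)\le e^{-\varepsilon^2m/16}$ for $m=\lfloor n/(\log d)^{10}\rfloor$. It does this by applying vertex-exposure concentration to $Y$, the least number of vertices whose deletion makes $G(m,p)$ $\lfloor(1-\varepsilon/2)m/r\rfloor$-colourable: the known $\chi$ asymptotics give $\E Y=o(m)$, while $\alpha<r$ forces $Y>\varepsilon m/2$. It then applies Bollob\'as's inequality $\Prob(\alpha(G(t,p))<r)\le\Prob(\alpha(G(m,p))<r)^{t(t-1)/(m(m-1))}$ with $t=\lceil n/(\log d)^4\rceil$. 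This reaches $\exp(-\Omega(n(\log d)^2))$, which survives the union bound.

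\emph{Your second stage rests on a false premise.} When every colour lies in few uncoloured lists, the uncoloured set need not be $o(n)$; it is all of $V$ if the lists are spread out from the start. So spare colours plus degeneracy cannot finish the colouring. One needs the Krivelevich--Vu completion lemma: lists of size $\lceil d/(\log d)^2\rceil$, with each colour in at most $n/(\log d)^4$ lists.

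\textbf{Regime (B).} Both the formulation and the mechanism are missing.

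\emph{The formulation does not fit $G(n,p)$.} The top-size independent sets are typically rare rather than uniformly spread. When $(1-p)n^{2/(s+1)}=\Theta(1)$ there are $\Theta(n)$ independent $(s+1)$-sets, yet a linear-size set of vertices meets none of them. An optimal colouring then uses linearly many $(s+1)$-classes, so $\chi$ is bounded away from $n/s$ and is not $\sim n/s'$ for any integer $s'$. Your statement with classes of size at most $s$ therefore loses a constant factor. Raising the class bound to $s+1$ would need a uniform $(s+1)$-set condition, which fails here. The paper instead allows classes of size at most $s+1$ with only a uniform $s$-set condition. It first deletes the $o(n)$ vertices lying in independent $(s+2)$-sets (first moment) and checks the $s$-set condition by Janson.

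\emph{The mechanism is missing.} An SDR of the sets $\bigcap_{v\in C}L(v)$ is of no use, since these intersections can all be empty. Finding new independent $s$-sets carrying common, distinct colours is the whole difficulty; Kierstead's example shows a $4/3$ loss without the uniform condition. The paper proceeds as follows.
First, it colours every class that has a common colour and deletes that colour from all remaining lists.
Second, if $k'$ classes remain, each colour now lies in at most $s$ lists per class, hence in at most $sk'$ lists, while each list still has at least $k'$ colours.
Third, this balance permits a fractional matching on hyperedges $Q\cup\{c\}$, where $Q$ is an independent $s$-set or a singleton whose lists all contain $c$. The weights come from sampling vertices with small probability and taking random maximal packings among the sampled vertices containing $c$; the sampling keeps all codegrees small.
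Finally, Kahn's matching theorem colours all but a small fraction of the vertices, and these are finished greedily from a randomly reserved set of colours.
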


Our proof distinguishes two cases according to the
size of the largest independent sets.
When $\log(1/(1-p))=o(\log(np))$, independent sets
of growing size are available.
In this case, we follow the colouring method of
Krivelevich and Vu~\cite{KV}, but establish a
sharper uniform independent-set estimate that
eliminates the factor-two loss in their bound.

In the second case, where $p$ approaches one
sufficiently rapidly that the independence number
is bounded, we take a different approach.
We establish the following deterministic result,
which may be of independent interest.
It shows that, under a suitable uniform
independent-set condition, a proper colouring with bounded colour-class sizes
yields an asymptotically matching upper bound
on the choice number. Structural conditions that
control the gap between the chromatic number and
the choice number have also been extensively studied;
see, for example,
\cite{ERT1980,Ohba2002,RSOhba2002,NRW2015,NWWZ2015}.

\begin{theorem}\label{prop:lifting}
For every integer $s\ge1$ and every $\varepsilon>0$,
there exist $\delta>0$ and $n_0$ such that the following
holds for all $n\ge n_0$.

Let $G$ be an $n$-vertex graph admitting a proper colouring
with $k$ colour classes, each of size at most $s+1$.
If every vertex set of size greater than $\delta n$
contains an independent set of size $s$, then
$\ch(G)\le k+\varepsilon n$.
\end{theorem}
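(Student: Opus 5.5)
The plan is a two-phase argument: a greedy phase that spends one colour on an independent set of size about $s$, followed by a random phase on the leftover vertices, where every colour is rare. Throughout, fix lists $L(v)$ with $|L(v)|=k+\varepsilon n$. Let $V_1,\dots,V_k$ be the given colour classes, each of size at most $s+1$, so $n/(s+1)\le k$. Choose $\delta\ll\varepsilon$ with $\delta$ small in terms of $s$.

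\textbf{Phase 1 (greedy).} Maintain the set $U$ of uncoloured vertices and the set of used colours. For a colour $c$ write $A_c=\{v\in U: c\in L(v)\}$. While some unused $c$ has $|A_c|>\delta n$, the hypothesis gives an independent $s$-set $I\subseteq A_c$; colour $I$ with $c$ and mark $c$ as used. To reach the sharp bound $k$ rather than about $n/s$, I would prefer independent sets of size $s+1$ whenever one lies inside $A_c$. I would also choose $c$ and $I$ so that the used colours are spread evenly over the remaining lists, e.g.\ by taking $I$ to consist of vertices whose lists have lost the most colours so far. Each step removes at most one colour from each list. So after $t$ steps every $v\in U$ still has at least $k+\varepsilon n-t$ available colours, and $n-|U|\ge st$.

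\textbf{Phase 2 (random).} At the end, every unused colour lies in at most $\delta n$ lists of vertices of $U$. Let each $v\in U$ pick a uniformly random colour from its residual list, of size $\ell$. For a fixed $v$, the expected number of neighbours in $U$ picking the same colour is at most $\sum_{c}\frac1\ell\cdot\delta n\cdot\frac1\ell=\delta n/\ell$. This is small provided $\ell\ge\varepsilon n/2$. Resampling the few conflicting vertices, or applying the Lov\'asz Local Lemma to a slightly truncated instance, then finishes the colouring of $U$. Hence everything reduces to the bound $t\le k+\varepsilon n/2$.

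\textbf{Main obstacle: the accounting $t\le k+\varepsilon n/2$.} Phase 1 colours at least $s$ vertices per colour, so naively $t\le n/s$. This can exceed $k\approx n/(s+1)$ by about $n/(s(s+1))$, so the naive count loses a constant factor. To close this gap, I would compare the greedy process with the reference colouring $V_1,\dots,V_k$, whose classes have size up to $s+1$. Assign weight $1/|V_i|$ to each vertex of $V_i$, so the total weight is exactly $k$. The goal is to show that each Phase 1 step removes total weight at least $1-O(\delta)$ on average. A step that colours only $s$ vertices falls short of weight $1$ only if it avoids vertices from small classes. When large sets of size more than $\delta n$ still carry a colour, the hypothesis lets us pick the $s$-set to include a vertex from a class of size at most $s$, or to enlarge it to size $s+1$ using $V_i$ itself when $V_i\subseteq A_c$.

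I expect this charging argument to be the genuinely difficult step. If direct greedy charging fails, the fallback is to replace Phase 1 by a semi-random nibble in the spirit of Kahn's argument for constant $p$. In that version, each colour picks a random independent set among the classes $V_i\cap A_c$ and the $s$-sets supplied by the hypothesis, weighted by a fractional colouring of total weight $k$. Concentration should then show that the lists shrink at the rate given by the fractional bound $k$, rather than $n/s$.
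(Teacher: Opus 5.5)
\textbf{There is a genuine gap in Phase 1.} You reduce the accounting there to a statement that is false. Your bound $k+\varepsilon n-t$ on residual lists charges every spent colour to every list, so you need $t\le k+\varepsilon n/2$. But $t$ is not controlled by $k$, even in instances where the theorem holds and your own procedure succeeds. Take $s=1$, $n=2k$, and $G=K_{2k}$ minus a perfect matching $\{x_iy_i\}$, with classes $V_i=\{x_i,y_i\}$. Give the lists $L(x_i)=\{(1,b):b\le M\}$ and $L(y_i)=\{(0,b):b\le M\}$, where $M=k+\varepsilon n$. Every $A_c$ is a clique containing no $V_i$, so each step colours a single vertex. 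Phase 1 therefore runs until at most $\delta n$ of the $x_i$ and at most $\delta n$ of the $y_i$ remain, giving $t\ge(1-2\delta)n>k+\varepsilon n/2$. In your charging, each step removes weight only $1/2$. Yet every surviving vertex has lost only the colours spent on its own side, and keeps at least $\varepsilon n$ colours. So the quantity to control is how many colours each individual list loses, not how many colours are spent in total. An unguided greedy does not achieve this: each colour of a fixed $L(v)$ can be spent on a different vertex of $A_c$ until $L(v)$ is empty. Your rule of taking the most depleted vertices comes with no analysis, and for $s\ge2$ you cannot choose the vertices of $I$ freely, since $I$ must be an independent $s$-set inside $A_c$. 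The nibble fallback is likewise only a hope. A fractional colouring of $G$ of total weight $k$ says nothing about how arbitrary lists interact with the independent sets.

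\textbf{The missing idea} is to run your move ``colour $V_i$ with $c$ when $V_i\subseteq A_c$'' to exhaustion \emph{first}, and to use the balance condition it leaves behind. If $k'$ classes remain, only $k-k'$ colours have been spent, so every list keeps at least $k'+\varepsilon n$ colours. Since no remaining class has a common colour and classes have at most $s+1$ vertices, each colour lies in at most $s$ lists per class. Hence each colour lies in at most $sk'\le sD$ lists, where $D$ is the trimmed list length. This local condition replaces your global count: each colour can serve $s$ vertices and is wanted by at most $sD$ of them, while each vertex has $D$ colours. For $s=1$, Hall's theorem then finishes outright. For $s\ge2$, one still needs every vertex of $S_c=\{v:c\in L(v)\}$ to be covered at the same rate by the sets used for $c$, with small codegrees. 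The paper obtains this as follows.
\begin{enumerate}
\item Sample vertices with probability $\sqrt\delta$, and take a random maximal packing of independent $s$-sets in $S_c\cap U$, together with the at most $\delta D$ leftover singletons.
\item Give each edge $Q\cup\{c\}$ weight $\Prob(Q\in\mathcal P_c)/((\sqrt\delta+\delta)D)$. Then vertex degrees are exactly $1/(1+\sqrt\delta)$, colour degrees are at most $1$, and codegrees are $o(1)$.
\item Apply Kahn's matching theorem to colour all but $\varepsilon' n$ vertices, and finish these with reserved colours.
\end{enumerate}
Your Phase 2 argument via the local lemma would also serve for this last step.
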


The condition on large vertex subsets cannot be
omitted in general: bounded colour-class sizes alone
do not guarantee asymptotic equality.
Indeed, Kierstead~\cite{Kierstead2000} showed that
the complete $k$-partite graph with parts of size
three has chromatic number $k$ but choice number
$\lceil(4k-1)/3\rceil$.

The proof of Theorem~\ref{prop:lifting} uses the
given colouring to control how often each colour
appears in the lists, and then combines random
vertex sampling with Kahn's matching theorem~\cite{KahnLP}
to colour almost all vertices. 
The remaining vertices can be coloured at an
arbitrarily small additional cost.
To apply this result in the bounded-independence
case of Theorem~\ref{thm:main}, we show that,
with high probability, deleting $o(n)$ exceptional
vertices from $G(n,p)$ leaves a graph satisfying
its hypotheses.

\medskip
\noindent\textbf{Organization of the paper.}
In Section~\ref{sec:overview}, we give an overview of the proof.
In Section~\ref{sec:growing}, we establish a uniform
independent-set estimate and apply it to list colouring.
In Section~\ref{sec:lifting}, we prove a deterministic result
relating colourings with bounded colour classes to list colourings.
Section~\ref{sec:proof} completes the proof of
Theorem~\ref{thm:main}.

\section{Proof overview}\label{sec:overview}

The proof of Theorem~\ref{thm:main} considers two cases
according to the size of the independent sets.
When $\log(1/(1-p))=o(\log(np))$, independent sets
of growing size are available.
In the other case, $p\to1$ and we work with independent sets
of bounded size.

\medskip
\noindent\textbf{Large independent sets.}
We follow the two-stage list-colouring method of
Krivelevich and Vu~\cite{KV}.
Whenever a colour appears in sufficiently many
uncoloured lists, we use it to colour a large
independent set. Once every colour appears in
few lists, their completion lemma colours the
remaining vertices.

The main ingredient is a sharper uniform
independent-set estimate: every sufficiently large
vertex subset contains an independent set of
almost optimal size. This eliminates the factor-two
loss in the bound of Krivelevich and Vu.

To establish this estimate, we start from known
chromatic-number asymptotics and show that the
probability that a smaller random graph contains
no independent set of the required size is
exponentially small. We then use a result of
Bollob\'as to strengthen this bound sufficiently
for a union bound over all large vertex subsets.

\medskip
\noindent\textbf{Bounded independent sets.}
The preceding method relies on colouring independent
sets whose size tends to infinity.
When independent sets have bounded size, however,
even starting from a given proper colouring,
losing just one vertex from each colour class
may leave a positive proportion of the vertices
uncoloured.
Moreover, the colour classes of the given colouring
cannot necessarily be used directly, since their
vertices need not share an available colour in
their lists.

Our main ingredient is a method for covering
almost all vertices by disjoint independent sets
that can be assigned distinct available colours.
These sets need not be contained in the colour
classes of the given colouring.
This approach leads to Theorem~\ref{prop:lifting}.

Starting from a given proper vertex colouring,
we first colour and remove any of its colour
classes whose vertices have a common available
colour in their lists, deleting that colour
from the remaining lists.
This initial reduction was also used by
Reed and Sudakov~\cite{RSOhba2002}.
Once this process stops, each colour appears
in at most $s$ lists within each remaining class.
Thus, if $k'$ classes remain, each colour occurs
in at most $sk'$ lists, while every remaining
vertex still has at least $k'$ available colours.

We exploit this balance by using random vertex
sampling to construct candidate independent sets
such that each vertex is well represented among
the candidates, while the overlap between them
is controlled. We then form an auxiliary hypergraph
whose edges consist of these independent sets
together with their common available colours.
Kahn's hypergraph matching theorem~\cite{KahnLP} allows us
to find a matching covering almost all remaining
vertices, corresponding to disjoint independent
sets assigned distinct available colours.
The few vertices left uncoloured can be handled
at an arbitrarily small additional cost,
yielding Theorem~\ref{prop:lifting}.

Finally, we apply Theorem~\ref{prop:lifting} to $G(n,p)$.
A first-moment estimate shows that only $o(n)$
vertices belong to independent sets of size $s+2$,
while Janson's inequality guarantees that every
vertex subset of linear size contains an independent
set of size $s$ when $s\ge2$.
After deleting the exceptional vertices, the
remaining graph has independence number at most
$s+1$ and satisfies the hypotheses of
Theorem~\ref{prop:lifting}.
Applying this theorem and restoring the deleted
vertices completes the proof.

\section{Large independent sets}\label{sec:growing}

We first treat the range in which the independent sets used as
colour classes have size tending to infinity. 

\begin{proposition}\label{prop:growing}
Let $p=p(n)\in(0,1)$ satisfy $d=np\to\infty$ and
$\log(1/(1-p))=o(\log(np))$.
Then, with high probability,
\[
\ch(G(n,p))=(1+o(1))\chi(G(n,p)).
\]
\end{proposition}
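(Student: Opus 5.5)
Since $\chi\le\ch$ always holds, the task is the upper bound $\ch(G(n,p))\le(1+o(1))\chi(G(n,p))$ w.h.p. Write $q=1-p$ and $b=1/q$. The known chromatic asymptotics (Bollob\'as, {\L}uczak, and the sparse-case results) give $\chi(G(n,p))=(1+o(1))\,n/\alpha_0$ w.h.p., with $\alpha_0=\log_b(np)\cdot(2+o(1))$ in the appropriate normalization. Under $\log(1/(1-p))=o(\log(np))$ we have $\alpha_0\to\infty$. The target is therefore $\ch\le(1+\varepsilon)n/\alpha_0$ for every fixed $\varepsilon>0$.

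\textbf{Step 1: uniform independent-set estimate.} First prove that w.h.p. every vertex set $U$ with $|U|\ge m:=n/(\log(np))^{C}$ (for a suitable constant $C$; some polylogarithmic threshold is needed) contains an independent set of size at least $(1-\varepsilon)\alpha(|U|)$. Here $\alpha(m)\approx 2\log_b(mp)$ is the typical independence number of $G(m,p)$. Since $\log(mp)=(1-o(1))\log(np)$, this is $(1-\varepsilon-o(1))\alpha_0$.

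The argument runs as follows. Apply the chromatic asymptotics to $G(m,p)$: with probability $1-o(1)$ it has a colouring with about $m/\alpha(m)$ classes, so it contains many disjoint independent sets of near-optimal size. Then use Bollob\'as's martingale argument. Take the maximum number of edge-disjoint-in-vertex independent sets of size $t=(1-\varepsilon)\alpha(m)$, or equivalently the maximal family of such sets pairwise sharing at most one vertex, and apply Azuma's inequality via edge exposure. This shows that the probability of having no independent $t$-set is at most $\exp(-m^{2-o(1)}p^{-1}\cdot\ldots)$, in fact $\exp(-\Omega(m^2/(t^4 p^{-1})))$ type bounds. The bound must beat the union bound factor $\binom{n}{m}\le \exp(m\log n)$.

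This is the main obstacle. When $p$ is tiny (say $np\to\infty$ slowly), Bollob\'as's bound gives $\exp(-\Omega(m^2 p/\mathrm{polylog}))$. This is only $\exp(-\Omega(mnp/\mathrm{polylog}))$, and we need it to exceed $m\log n$, which fails when $np$ is small. In that regime I would instead use a Talagrand/vertex-exposure bound together with a choice of $m$ closer to $n/\log(np)\cdot$(small). The fallback is the first step in the outline: first obtain an exponentially small failure probability $e^{-cm}$ from the chromatic asymptotics via concentration. Then strengthen using the fact that the relevant union is only over sets $U$ of size $\Theta(n/\alpha_0)$-ish times a polylog, so that $\log\binom{n}{m}\approx m\log(n/m)=m\cdot O(\log\log np)$, which a bound of the form $e^{-m\log\log(np)\cdot\omega(1)}$ suffices to beat.

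\textbf{Step 2: list colouring à la Krivelevich--Vu.} Fix lists of size $k=(1+2\varepsilon)n/\alpha_0$. While some colour $c$ appears in at least $m$ lists of uncoloured vertices, apply Step 1 to that set of vertices. This yields an independent set of size $(1-\varepsilon)\alpha_0$ inside it; colour it with $c$ and delete $c$ everywhere. Each such round consumes one colour and colours $(1-\varepsilon)\alpha_0$ vertices. Hence at most $n/((1-\varepsilon)\alpha_0)\le(1+2\varepsilon)n/\alpha_0-\varepsilon n/\alpha_0$ colours are used, and every uncoloured vertex retains at least $\varepsilon n/(2\alpha_0)$ colours.

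When the process stops, every colour lies in fewer than $m$ remaining lists. Each remaining vertex has $\gg m\cdot\Delta$-type slack, so the Krivelevich--Vu completion lemma finishes the colouring. That lemma is a local-lemma/random-choice argument requiring each colour to appear in few lists relative to the list size and the degrees, with $\Delta\approx np$. The exponent $C$ in $m$ is chosen to satisfy its hypothesis, namely that $\varepsilon n/\alpha_0$ is large compared with $m\cdot p\cdot$polylog. Letting $\varepsilon\to0$ gives the proposition.
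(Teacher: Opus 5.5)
\textbf{Step 2 is fine; the gap is in Step 1.} Your Step~2 is the paper's two-stage Krivelevich--Vu argument: threshold $n/(\log d)^4$, then the completion lemma with roughly $d/(\log d)^2$ colours left per vertex. It goes through once Step~1 is established. The one slip there is $1/(1-\varepsilon)\le 1+\varepsilon$, which is false but harmless if you take slightly longer lists.

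\textbf{Why your fallback fails.} The first half of your fallback matches the paper's Claim. Concentration applied to the minimum number of vertices whose deletion makes $G(m,p)$ $\lfloor(1-\varepsilon/2)m/r\rfloor$-colourable, via McDiarmid's vertex-exposure inequality, gives $\Prob(\alpha(G(m,p))<r)\le e^{-c\varepsilon^2 m}$. But this cannot beat the union bound. We have $\log\binom nm\ge m\log(n/m)=Cm\log\log d$ for $m=n/(\log d)^C$, and the completion lemma forces this polylogarithmic threshold. A bounded-differences bound over $m$ vertex-exposure steps can never give better than $e^{-2m}$, since the deviation is at most $m$. Other adjustments do not help either:
\begin{itemize}
\item Changing $m$ fails, because $\log(n/m)$ always exceeds $c\varepsilon^2$.
\item Splitting a larger $t$-set into disjoint $m$-blocks and using independence gives only $e^{-c\varepsilon^2 t}$, which is again linear in the set size.
\end{itemize}
You observe that a bound of the form $e^{-\omega(m\log\log d)}$ would suffice, but you give no mechanism that produces one.

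\textbf{The missing idea} is an amplification whose exponent scales with the number of \emph{pairs} rather than the number of vertices. The paper uses Bollob\'as's comparison inequality
\[
\Prob\bigl(\alpha(G(t,p))<r\bigr)\le\Prob\bigl(\alpha(G(m,p))<r\bigr)^{t(t-1)/(m(m-1))}.
\]
It proves the $e^{-\varepsilon^2m/16}$ bound at the smaller scale $m=\lfloor n/(\log d)^{10}\rfloor$ and transfers it to $t=\lceil n/(\log d)^4\rceil$. This gives
\[
\Prob\bigl(\alpha(G(t,p))<r\bigr)\le\exp\bigl(-\varepsilon^2t^2/(16m)\bigr)\le\exp\bigl(-\varepsilon^2n(\log d)^2/16\bigr),
\]
which beats even the crude $2^n$ union bound over all $t$-sets. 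Without this step, or some substitute giving an exponent superlinear in $|U|$ in the sparse regime, Step~1 and hence the whole proof remain incomplete.
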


We use the two-stage list-colouring method of Krivelevich
and Vu~\cite{KV}. Whenever a colour appears in sufficiently
many uncoloured lists, we use it to colour a large independent
set. Once every colour appears in few lists, a completion
lemma colours the remaining vertices. The main difficulty is
to find independent sets uniformly in every sufficiently large
vertex subset, since the subsets arising in the first stage
depend on the previous colouring choices.

\subsection{Uniform independent sets}

The first stage of our list-colouring argument
requires large independent sets in every sufficiently
large vertex subset. 

\begin{lemma}\label{lem:uniform-independent}
Suppose $d=np\to\infty$ and
$\log(1/(1-p))=o(\log d)$.
Fix $\varepsilon\in(0,1)$, and put
\[
r=\left\lfloor(1-\varepsilon)
\frac{2\log d}{\log(1/(1-p))}\right\rfloor.
\]
With high probability, every vertex set of size at least
$n/(\log d)^4$ in $G(n,p)$ contains an independent set
of $r$ vertices.
\end{lemma}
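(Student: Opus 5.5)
Fix a set $U$ with $|U|=m\ge n/(\log d)^4$. The induced graph $G[U]$ is distributed as $G(m,p)$ with average degree $d'=mp\ge d/(\log d)^4$, so $\log d'=(1-o(1))\log d$. We need
\[
\Prob\bigl(\alpha(G[U])<r\bigr)\le \exp\bigl(-\omega(n\log\log d)\bigr),
\]
because the number of choices of $U$ is at most $2^n$. It also suffices to treat sets of size exactly $m_0=\lceil n/(\log d)^4\rceil$, since a larger $U$ contains such a subset. The union bound therefore has only $\binom{n}{m_0}\le \exp(m_0\log(e(\log d)^4))=\exp(O(m_0\log\log d))$ terms. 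The required failure probability per set is thus $\exp(-\omega(m_0\log\log d))$.

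\textbf{Step 1 (chromatic-number input).} I would use the known asymptotics of $\chi(G(m,p))$, due to Bollob\'as, \L uczak and, for $p\to1$ with $\log(1/(1-p))=o(\log d)$, the standard extensions. They give, whp,
\[
\chi(G(m,p))\le(1+\varepsilon/4)\frac{m\log(1/(1-p))}{2\log d'} .
\]
A greedy removal argument then yields a weaker statement for every set of size $\ge m/\log d'$, but this is not yet uniform enough.

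\textbf{Step 2 (exponentially small failure probability).} The cleaner route is to reprove the colouring bound quantitatively, through the \L uczak/Bollob\'as scheme. Partition $U$ into blocks $W$ of size $t=m_0/(\log d)^2$, say. In $G[W]\sim G(t,p)$ the expected number of independent $r$-sets is huge, because $r\le(1-\varepsilon)\cdot 2\log(tp)/\log(1/(1-p))$ holds for $t p\ge d/(\log d)^{6}$. I would apply Janson's inequality, or Talagrand's inequality applied to the maximum number of edge-disjoint-pairwise independent $r$-sets as in Bollob\'as's proof. This gives
\[
\Prob\bigl(\alpha(G[W])<r\bigr)\le \exp\bigl(-t^{2-o(1)}p^{-1}\cdots\bigr),
\]
and more usefully $\exp(-\Omega(t^{1+c}))$ for some $c>0$ depending on $\varepsilon$. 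The margin comes from the factor $(1-\varepsilon)$: $\mathbb E X\ge (tp/r)^{\varepsilon r}$-type gains, and $\Delta$ is controlled by pairs sharing $2\le i<r$ vertices, as in Bollob\'as's 1988 computation. Since the blocks are disjoint, the events are independent. Hence the probability that $U$ fails is at most the probability that every block fails, which is $\exp(-\Omega((\log d)^2 t^{1+c}))$, far below the required $\exp(-\omega(m_0\log\log d))$. Actually one block already suffices once $t^{1+c}\gg m_0\log\log d$, so I would choose $t=m_0$ directly if Janson's bound gives $\exp(-m_0^{1+c})$ there.

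\textbf{Main obstacle.} The main obstacle is verifying the Janson bound $\mu^2/\Delta\ge m_0^{1+c}$, or at least $\gg m_0\log\log d$, uniformly in the regime where $p\to1$ slowly and where $d$ grows very slowly. When $p$ is near $1$, independent sets have size $r$ only moderately large, and $\Delta$ is dominated by pairs overlapping in $r-1$ vertices. When $d$ is tiny, $m_0 p$ is only $d/(\log d)^4$, and the bound $\mu^2/\Delta\approx m_0^2p/r^4$-type estimates must beat $m_0\log\log d$. This works because $m_0p\to\infty$ faster than $\mathrm{poly}(r)\log\log d$, except possibly for extremely slowly growing $d$. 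If that range fails, I would first try a two-round exposure, in the spirit of the Bollob\'as martingale: use the exponential bound to find, in any $U$, many disjoint independent $r$-sets covering a large fraction. Failing that, I would replace $(\log d)^4$ by the exact slack allowed and work with $\varepsilon r$-fold greedy extensions, since $r=O(\log d/p)$ makes the $\log\log d$ loss negligible against the $\varepsilon$-margin in the exponent.
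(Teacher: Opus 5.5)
There is a genuine gap. Your reduction to sets of size exactly $m_0$ is fine, and so is the per-set target $\exp(-\omega(m_0\log\log d))$. But the step meant to reach that target fails, and it fails in the sparse range, which is exactly what makes the lemma hard.

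\textbf{The Janson step.} The claimed bound $\Prob(\alpha(G(t,p))<r)\le\exp(-\Omega(t^{1+c}))$ with fixed $c>0$ is false there. The disjoint union $H$ of $r-1$ cliques of size about $t/(r-1)$ has $\alpha(H)<r$ and about $t^2/(2r)$ edges. Hence $\Prob(\alpha(G(t,p))<r)\ge p^{e(H)}=\exp\bigl(-\Theta(t\cdot tp\log(1/p)/\log(tp))\bigr)$, which is $\exp(-t^{1+o(1)})$ when $tp=t^{o(1)}$.

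Janson's inequality also cannot deliver the weaker bound you actually need. With $r\approx2(1-\varepsilon)\log d/p$, the pairs overlapping in two vertices already force $\mu^2/\bar\Delta=O(t^2/r^4)=O(t^2p^4/(\log d)^4)$. This beats $t\log\log d$ only when $p\ge n^{-1/4+o(1)}$, which is essentially Krivelevich's earlier range. Once $p\ll t^{-1/2}$, two random $r$-sets overlap in $r^2/t\to\infty$ vertices on average. Then $\bar\Delta\ge(1-o(1))\mu^2$, and Janson gives nothing at all. Your heuristic that $m_0p$ outgrows $\mathrm{poly}(r)$ overlooks the factor $1/p$ in $r$. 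The fallback ideas at the end are not developed into an argument.

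\textbf{The block step.} Disjoint blocks amplify only linearly. A per-block bound of the form $\exp(-c|W|)$, which is what vertex-exposure concentration gives, multiplies to $\exp(-cm_0)$. This loses to the $\binom{n}{m_0}=\exp(\Theta(m_0\log\log d))$ sets in the union bound.

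\textbf{What the paper does instead.} It supplies the two missing ingredients.
\begin{enumerate}
\item At the much smaller scale $m=\lfloor n/(\log d)^{10}\rfloor$, it proves $\Prob(\alpha(G(m,p))<r)\le e^{-\varepsilon^2m/16}$ from the chromatic-number asymptotics. Let $Y$ be the minimum number of vertices whose deletion makes $G(m,p)$ $\lfloor(1-\varepsilon/2)m/r\rfloor$-colourable. Then $\E Y=o(m)$, since $\Prob(Y=0)\to1$ and $Y\le m$. The event $\alpha<r$ forces $Y>\varepsilon m/2$, and $Y$ is $1$-Lipschitz under vertex exposure, so Lemma~\ref{lem:bounded-differences} applies.
\item Bollob\'as's inequality (Lemma~\ref{lem:hereditary-comparison}) raises this bound to a power quadratic in the ratio of the two scales. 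This gives failure probability $\exp(-\Omega(n(\log d)^2))$ for sets of size $\lceil n/(\log d)^4\rceil$, which is small enough to survive even a crude $2^n$ union bound.
\end{enumerate}
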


Our proof uses the known asymptotic formula for
the chromatic number, together with two probability
inequalities.

\begin{lemma}[{Bollob\'as~\cite{Bollobas1988},
\L uczak~\cite{Luczak1991}, and
Isaev and Kang~\cite{IK}}]\label{lem:ordinary}
Suppose $p=p(n)\in(0,1)$ satisfies $np\to\infty$ and
$\log(1/(1-p))=o(\log(np))$. Then, with high probability,
\begin{equation}\label{eq:ordinary}
\chi(G(n,p))=(1+o(1))
\frac{n\log(1/(1-p))}{2\log(np)}.
\end{equation}
\end{lemma}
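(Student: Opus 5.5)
The statement is the classical asymptotic formula, so I would prove the two inequalities separately: a first-moment lower bound, and an upper bound from greedily removing large independent sets in the style of Bollob\'as (dense range) and \L uczak (sparse range). Throughout put $q=1-p$, $b=\log(1/q)$ and $d=np$. The hypothesis $b=o(\log d)$ makes the target independence number $r_0=2\log d/b$ tend to infinity.

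\textbf{Lower bound.} The expected number of independent $r$-sets is $\binom{n}{r}q^{\binom r2}\le\bigl(\tfrac{en}{r}q^{(r-1)/2}\bigr)^r$. For $r=(1+\varepsilon)r_0$, the identity $q^{r_0/2}=1/d$ gives $q^{(r-1)/2}\approx d^{-1-\varepsilon}$. Also $n/r\le n b/\log d$ and $b\ge p$. Hence the base is at most $e\,n\,b\,d^{-1-\varepsilon}/\log d$, which is $o(1)$ in the sparse case $b\approx p$ and also when $b\gg p$. So whp $\alpha(G)\le(1+\varepsilon)r_0$, and $\chi\ge n/\alpha\ge(1-\varepsilon)\,nb/(2\log d)$.

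\textbf{Upper bound.} The key step is a weak uniform version of the independent-set estimate. Whp, every vertex set $W$ with $|W|=m\ge n/(\log d)^2$ contains an independent set of size $r=\lfloor(1-\varepsilon)r_0'\rfloor$, where $r_0'$ is computed with $mp$ in place of $d$; since $\log(mp)=(1+o(1))\log d$, this is still $(1-\varepsilon+o(1))r_0$. I would prove it with Janson's inequality for the event that $G[W]$ has no independent $r$-set. For each fixed $W$, Janson gives a failure probability $\exp(-\mu^2/(2\bar\Delta))$. The point of choosing $r$ slightly below $r_0$ is that $\mu$ is then polynomially large, and the ratio $\mu^2/\bar\Delta$ is dominated by pairs of $r$-sets overlapping in two vertices. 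This should give failure probability $\exp(-m^2/\mathrm{polylog})$, or at least $\exp(-\omega(m\log(n/m)))$, which beats the $\binom nm$ union bound.

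In the very sparse regime ($d$ growing slowly) this bound is too weak, and I would instead follow \L uczak. Expose $G$ on a random partition into parts of size $\sim n\log^{-2}d$. Then use a vertex-exposure martingale on the maximum number of vertex-disjoint independent $r$-sets, combined with Talagrand's inequality, to get the exponential concentration needed inside each part.

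Given the weak uniform estimate, colour greedily. Repeatedly pull out an independent $r$-set from the uncoloured vertices while at least $n/(\log d)^2$ remain. This uses at most $n/r=(1+O(\varepsilon))nb/(2\log d)$ colours. The remaining set $U$ has $|U|\le n/(\log d)^2$. By a first-moment count on dense subgraphs, whp every set of at most $n/(\log d)^2$ vertices spans a graph of degeneracy $O(d/(\log d)^2+\log n)$. Degeneracy plus one colours suffice for $U$. Since $\chi\asymp nb/\log d\ge d/\log d$, this adds only $o(\chi)$ colours, provided the $\log n$ term is harmless. If it is not, I would shrink the threshold to $n/(\log d)^3$ and repeat the argument.

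The main obstacle is the uniform estimate when $d$ grows arbitrarily slowly. There the Janson exponent is only about $m^2p^2/\mathrm{polylog}(d)$, which barely competes with $\binom nm$, and this is exactly why the Talagrand/martingale refinement over a random partition is needed. That refinement is where most of the technical work would go.
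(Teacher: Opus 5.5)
\textbf{Verdict: genuine gap.} The upper bound for $p$ below roughly $n^{-1/4}$ (up to logarithmic factors) is not proved, and the tool you name for it cannot prove it. For context, the paper does not prove this lemma at all; it imports it from Bollob\'as, \L uczak and Isaev--Kang.

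\textbf{What works.} Your first-moment lower bound is correct. In the dense range your upper bound is essentially Bollob\'as's argument: Janson for a fixed $m$-set, a union bound, greedy extraction, and a degeneracy bound for the leftover. Two side remarks.
\begin{enumerate}
\item The $\log n$ term in your degeneracy bound comes from a crude count. With $\lambda=ed/(\log d)^2$, the expected number of sets $S$ with $|S|=s\le n/(\log d)^2$ and $e(S)\ge\lambda s$ is at most $\sum_s\bigl[e(\log d)^2 2^{-\lambda}\bigr]^s=o(1)$. So the leftover is $O(d/(\log d)^2)$-degenerate. Your fallback of shrinking the threshold would not have removed a genuine $\log n$ term anyway.
\item More importantly, plain Janson fails well before $d$ grows slowly. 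The two-vertex-overlap term of $\bar\Delta/\mu^2$ is about $r^4/m^2$. Beating $\binom nm$ needs $\mu^2/\bar\Delta\gg m\log(n/m)$, which fails once $p$ is below roughly $n^{-1/4}$, up to logarithmic factors. Below about $n^{-1/2}$, two random $r$-subsets of $W$ typically share $r^2/m\gg1$ vertices, and Janson gives nothing. So the sparse case is a polynomially wide range of $p$.
\end{enumerate}

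\textbf{Why your sparse-range tool fails.} In a part of size $m\approx n/(\log d)^2$ the target sets have size $r\approx 2\log d/p$. Hence the maximum number $Y$ of vertex-disjoint independent $r$-sets satisfies $Y\le m/r$. A vertex-exposure martingale gives at best
\[
\Prob(Y=0)\le\exp\bigl(-(\E Y)^2/(2m)\bigr)\le\exp\bigl(-m/(2r^2)\bigr).
\]
Talagrand's inequality, with certificates of size $rY$, gives the same order. But $m/r^2\approx d^2/(4n(\log d)^4)$, which tends to $0$ whenever $d\le\sqrt n$. The bound is therefore vacuous, nowhere near the $\exp(-\omega(m\log(n/m)))$ needed against $\binom nm$.

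\textbf{Why the random partition does not help.} The greedy step applies the estimate to leftover sets that depend on $G$, so you still need uniformity over all large sets. Colouring the parts separately with disjoint palettes only reduces the problem to the same asymptotic statement for $G(m,p)$.

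\textbf{No shortcut through the paper.} You cannot borrow Lemma~\ref{lem:uniform-independent}. Its Claim~\ref{clm:small-graph-tail} derives the exponential tail for $\{\alpha(G(m,p))<r\}$ from Lemma~\ref{lem:ordinary} itself, so using it would be circular.

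\textbf{What is missing.} The upper bound in the sparse range is the content of \L uczak's theorem. To close it you need either a per-set tail of order $\exp(-\omega(m\log(n/m)))$ in that range, or a colouring scheme that avoids the union bound over subsets. Your proposal supplies neither.
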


\begin{lemma}[McDiarmid~{\cite[Lemma~3.2]{McDiarmid1989}}]
\label{lem:bounded-differences}
Let $F=G(m,p)$, where $m\ge1$ and $p\in[0,1]$,
and let $f$ be a real-valued function on the graphs
with vertex set $[m]$. Suppose
$|f(F_1)-f(F_2)|\le1$ whenever $F_1$ and $F_2$
differ only in edges incident with one vertex.
Then, for every $u\ge0$,
\[
\Prob\bigl(|f(F)-\E f(F)|\ge u\bigr)
\le2\exp\left(-\frac{2u^2}{m}\right).
\]
\end{lemma}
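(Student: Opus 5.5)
We need to prove McDiarmid's lemma: vertex-exposure martingale plus Azuma–Hoeffding, with the sharp constant $2u^2/m$.

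The plan is the standard vertex-exposure martingale combined with the Hoeffding–Azuma inequality in its sharp form. Order the vertices as $1,\dots,m$. For $i=1,\dots,m$ let $Y_i$ be the random vector of indicators of the edges $\{j,i\}$ with $j<i$; these $Y_1,\dots,Y_m$ are independent, since distinct edges of $G(m,p)$ are independent, and $F$ is a function of $(Y_1,\dots,Y_m)$. Put $X_i=\E[f(F)\mid Y_1,\dots,Y_i]$, so that $X_0=\E f(F)$ and $X_m=f(F)$. This is a Doob martingale, and $f(F)-\E f(F)=\sum_{i=1}^m D_i$ with $D_i=X_i-X_{i-1}$.

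The key step is to show that, conditionally on $Y_1,\dots,Y_{i-1}$, the increment $D_i$ ranges over an interval of length at most $1$. By independence,
\[
X_i=g_i(Y_1,\dots,Y_i),\qquad g_i(y_1,\dots,y_i)=\E f(y_1,\dots,y_i,Y_{i+1},\dots,Y_m).
\]
Fix $y_1,\dots,y_{i-1}$ and two values $y_i,y_i'$. The two graphs $(y_1,\dots,y_i,Y_{i+1},\dots)$ and $(y_1,\dots,y_i',Y_{i+1},\dots)$ are coupled through the same $Y_{i+1},\dots,Y_m$. They differ only in edges between $i$ and earlier vertices, hence only in edges incident with vertex $i$. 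By hypothesis the values of $f$ differ by at most $1$ pointwise, and taking expectations gives $|g_i(\dots,y_i)-g_i(\dots,y_i')|\le1$. Consequently $D_i$ lies in $[a_i,a_i+1]$, where $a_i=\inf_{y}g_i(y_1,\dots,y_{i-1},y)-X_{i-1}$ depends only on $Y_1,\dots,Y_{i-1}$.

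Next, apply Hoeffding's lemma conditionally: a mean-zero variable supported in an interval of length $c$ satisfies $\E e^{\lambda D}\le e^{\lambda^2c^2/8}$. Here $\E[D_i\mid Y_1,\dots,Y_{i-1}]=0$ and $c=1$. Iterating the tower property gives
\[
\E e^{\lambda(f-\E f)}\le e^{\lambda^2m/8}.
\]
Markov's inequality then gives $\Prob(f-\E f\ge u)\le\exp(-\lambda u+\lambda^2 m/8)$. Optimizing at $\lambda=4u/m$ yields $\exp(-2u^2/m)$. The same argument applied to $-f$ handles the lower tail, and a union bound gives the factor $2$.

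The main point needing care is the interval-length-one claim rather than merely $|D_i|\le1$. The weaker bound would only give $\exp(-u^2/(2m))$, so the sharp constant $2u^2/m$ requires that $D_i$ be confined to an interval of length $1$ determined by the past, which the coupling above provides. One should also note that the conditional expectations are well defined, since everything is finite. No other subtlety arises, and the degenerate cases $p\in\{0,1\}$ or $m=1$ are trivial.
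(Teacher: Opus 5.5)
Your proof is correct: the vertex-exposure martingale, the observation that each increment $D_i$ lies in an interval of length one whose endpoints depend only on $Y_1,\dots,Y_{i-1}$, and the conditional Hoeffding lemma together give exactly $2\exp(-2u^2/m)$. The paper gives no proof of its own and simply cites McDiarmid, whose argument is this same martingale-plus-Hoeffding route, so your proposal matches the cited source.
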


\begin{lemma}[Bollob\'as~{\cite[Theorem~5]{BollobasICM1998}}]
\label{lem:hereditary-comparison}
Let $0<p<1$, and let $r,m,t$ be integers with
$2\le r\le m\le t$. Then
\[
\Prob\bigl(\alpha(G(t,p))<r\bigr)
\le
\Prob\bigl(\alpha(G(m,p))<r\bigr)^{
\frac{t(t-1)}{m(m-1)}}.
\]
\end{lemma}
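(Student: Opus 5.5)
The plan is to read the event $\{\alpha(G(t,p))<r\}$ as an intersection of events, each living on the edges of a single $m$-subset. Then a generalized Hölder inequality (Finner's inequality, equivalently the product form of Shearer's lemma) bounds its probability by a product of the individual probabilities. The exponent $t(t-1)/(m(m-1))$ will come out as the ratio of the number of $m$-subsets to the number of $m$-subsets containing a fixed pair.

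\textbf{Step 1 (decomposition).} Work on the independent edge indicators $\{X_e: e\in\binom{[t]}{2}\}$ of $G=G(t,p)$. For each $S\in\binom{[t]}{m}$ let $A_S$ be the event that $\alpha(G[S])<r$. This event depends only on the coordinates $X_e$ with $e\in\binom{S}{2}$. If $\alpha(G)<r$, then every induced subgraph also has independence number less than $r$. Hence
\[
\{\alpha(G)<r\}\subseteq\bigcap_{S}A_S,
\qquad\text{so}\qquad
\mathbf 1\{\alpha(G)<r\}\le\prod_{S}\mathbf 1_{A_S}.
\]
In fact equality holds because $r\le m$, but the inclusion is all we need. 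Also, $G[S]$ is distributed as $G(m,p)$, so $\Prob(A_S)=q:=\Prob(\alpha(G(m,p))<r)$ for every $S$.

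\textbf{Step 2 (covering multiplicity).} Each pair $e$ lies in exactly $D=\binom{t-2}{m-2}$ of the sets $\binom{S}{2}$. The number of sets $S$ is $N=\binom{t}{m}$, and
\[
\frac{N}{D}=\frac{t(t-1)}{m(m-1)}.
\]

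\textbf{Step 3 (Finner's inequality).} Let $f_S$ be nonnegative functions, each depending only on the coordinates in $\binom{S}{2}$, where the coordinates are independent and each coordinate is covered exactly $D$ times. Finner's inequality states that
\[
\E\prod_S f_S\le\prod_S\bigl(\E f_S^{D}\bigr)^{1/D}.
\]
I would prove this by induction on the number of coordinates, as in the standard argument. Condition on all coordinates but one, say $X_e$. Apply Hölder's inequality with $D$ equal exponents to the $D$ factors that involve $X_e$, and leave the remaining factors unchanged, since they do not depend on $X_e$. Then integrate out $X_e$ and recurse on the remaining coordinates. If some coordinate were covered fewer than $D$ times, one can pad with constant functions, but here the cover is exactly uniform. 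For indicators, $f_S^D=f_S$, so applying the inequality with $f_S=\mathbf 1_{A_S}$ gives
\[
\Prob(\alpha(G)<r)\le\prod_S\Prob(A_S)^{1/D}=q^{N/D},
\]
which is the claim.

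\textbf{Main obstacle.} The main obstacle is the correct bookkeeping in the inductive Hölder step. After one coordinate is integrated out, the functions become functions of fewer coordinates and are raised to fractional powers. The induction hypothesis should therefore be stated for general nonnegative functions on an arbitrary uniform cover, rather than only for indicators. Once the inequality is formulated at that level of generality, the rest of the argument is the counting in Step 2.
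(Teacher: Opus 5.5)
Your proof is correct and complete. The inclusion $\{\alpha(G(t,p))<r\}\subseteq\bigcap_S A_S$, the fact that the sets $\binom S2$ form a $\binom{t-2}{m-2}$-uniform cover of $\binom{[t]}{2}$, and Finner's inequality together give exactly the exponent $\binom tm/\binom{t-2}{m-2}=t(t-1)/(m(m-1))$, and your inductive proof of Finner's inequality is set up correctly, with the hypothesis stated for arbitrary nonnegative functions.

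The paper gives no proof of its own; it quotes Bollob\'as, where the inequality is stated for an arbitrary hereditary property. Your argument is the standard uniform-cover (Bollob\'as--Thomason box theorem / Shearer-type) argument by which such monotonicity results are proved, written in functional form. Since it uses nothing about $\{\alpha<r\}$ beyond heredity, it proves the general statement as well.
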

\smallskip
\begin{proof}[Proof of Lemma~\ref{lem:uniform-independent}]
Set
$
t=\left\lceil n/(\log d)^4\right\rceil,
m=\left\lfloor n/(\log d)^{10}\right\rfloor.
$
We first show that $\Prob(\alpha(G(m,p))<r)$
is exponentially small in $m$.
We then use Lemma~\ref{lem:hereditary-comparison}
to transfer this bound to $G(t,p)$, obtaining
an estimate strong enough for a union bound
over all $t$-vertex subsets.

\begin{claim}\label{clm:small-graph-tail}
For $F=G(m,p)$, we have
$
\Prob(\alpha(F)<r)\le e^{-\varepsilon^2m/16}
$
for all sufficiently large $n$.
\end{claim}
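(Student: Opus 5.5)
Our plan is to bound $\Prob(\alpha(F)<r)$ through the number of colour classes of an optimal colouring of $F$ that are large. Let $f(F)$ be the maximum number of vertices of $F$ that can be covered by pairwise disjoint independent sets, each of size at least $r$. Changing the edges at one vertex changes $f$ by at most $r$: delete that vertex from whichever chosen set contains it, and drop that set if it becomes too small. So $f/r$ satisfies the Lipschitz hypothesis of Lemma~\ref{lem:bounded-differences}. If $\alpha(F)<r$ then $f(F)=0$. It therefore suffices to show $\E f(F)\ge \varepsilon m/4$ or so. Lemma~\ref{lem:bounded-differences} with $u=\E f/r$ then gives
\[
\Prob(f=0)\le 2\exp\bigl(-2(\E f)^2/(r^2m)\bigr).
\]
This is not yet of the form $e^{-\varepsilon^2 m/16}$, because the factor $r^2$ appears. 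A better choice of statistic avoids it, and we use that instead.

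We take $g(F)$ to be the minimum number of independent sets of size less than $r$ needed, together with at most $m/r$ arbitrary independent sets, to cover $V(F)$. Equivalently, we take $g$ to be a vertex-exposure Lipschitz quantity counting vertices not covered by large classes, normalised so that one vertex changes it by $1$. A concrete choice is $h(F)=\min$ over proper colourings $c$ of $F$ of the number of vertices lying in colour classes of size less than $r$. Moving one vertex into a new singleton class changes $h$ by at most $r$, again producing the unwanted $r$ factor. We therefore instead use the classical statistic $\chi$-type count $\phi(F)=\min$ number of colours, where classes of size less than $r$ must be singletons. Changing one vertex changes $\phi$ by at most $1$, since that vertex can be recoloured with a fresh colour. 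If $\alpha(F)<r$, then $\phi(F)=m$.

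The key step is to show $\E\phi(F)\le(1-\varepsilon/2)m$. For this we use Lemma~\ref{lem:ordinary} applied to $G(m,p)$. Here $mp=d/(\log d)^{10}\to\infty$ and $\log(mp)=(1+o(1))\log d$, so the hypotheses transfer. With high probability $\chi(F)\le(1+\varepsilon/10)\,m\log(1/(1-p))/(2\log d)$, and this is $o(m)$ because $\log(1/(1-p))=o(\log d)$. Take an optimal colouring and split every class of size less than $r$ into singletons. The vertices in such classes number at most $\chi(F)\cdot r\le(1+\varepsilon/10)(1-\varepsilon)m<(1-\varepsilon/2)m$. The total number of colours is then at most
\[
\chi(F)+(1-\varepsilon/2)m\le(1-\varepsilon/4)m+o(m).
\]
Hence, with high probability, $\phi\le(1-\varepsilon/3)m$. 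Since $\phi\le m$ always, we get $\E\phi\le(1-\varepsilon/3)m+o(m)$. Lemma~\ref{lem:bounded-differences} with $u=\varepsilon m/4$ then gives
\[
\Prob(\alpha(F)<r)\le\Prob(\phi=m)\le2e^{-\varepsilon^2m/8}\le e^{-\varepsilon^2m/16}
\]
for large $n$.

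The main obstacle is checking that Lemma~\ref{lem:ordinary} applies to the smaller graph with the same $p$, including the ratio $\log(mp)/\log d\to1$, and the arithmetic showing that the floor in $r$ still leaves slack of order $\varepsilon m$.
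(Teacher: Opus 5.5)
\textbf{There is a genuine gap.} Your proof relies on the claim that $\phi$ is $1$-Lipschitz when the edges at one vertex change, and this claim is false.

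Recolouring the changed vertex $v$ with a fresh colour removes it from its old class. If that class had exactly $r$ vertices, the remaining $r-1\ge2$ vertices form a class of a forbidden size, which must then be split into singletons. This is not just a defect of that particular repair. Suppose $F_1$ has a unique independent set $I$ of size at least $r$, with $|I|=r$ and $v\in I$. Then $\phi(F_1)=m-r+1$. Adding a single edge from $v$ to another vertex of $I$ gives $F_2$ with $\alpha(F_2)<r$, hence $\phi(F_2)=m$.

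So $\phi$ has Lipschitz constant about $r$, which is exactly the problem you identified for $f$ and $h$. Lemma~\ref{lem:bounded-differences} then only gives a bound of the form $\exp(-c\varepsilon^2m/r^2)$. Since $r\to\infty$, this does not prove the claim. When $p\to0$ we have $r/\log d\to\infty$, so this bound is also far too weak for the later union bound over $2^n$ vertex sets.

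\textbf{The missing idea.} Constrain the number of colour classes rather than their sizes, and measure the defect by deleted vertices. The paper takes $Y$ to be the minimum number of vertices whose deletion makes $F$ $k_0$-colourable, where $k_0=\lfloor(1-\varepsilon/2)m/r\rfloor$.
\begin{itemize}
\item $Y$ is genuinely $1$-Lipschitz: add the changed vertex to an optimal deletion set.
\item $\E Y\le m\,\Prob(Y>0)=o(m)$, because $\chi(F)\le k_0$ with high probability by Lemma~\ref{lem:ordinary}. This is the same input you already checked.
\item If $\alpha(F)<r$, then $m-Y\le k_0(r-1)<(1-\varepsilon/2)m$, so $Y>\varepsilon m/2$.
\end{itemize}
The required deviation is therefore of order $\varepsilon m$ for a $1$-Lipschitz function. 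This gives $2e^{-\varepsilon^2m/8}$ with no factor of $r$.

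\textbf{A smaller slip.} You assert that $(1-\varepsilon/4)m+o(m)\le(1-\varepsilon/3)m$, which is false as written. However, your preceding estimate actually yields $(1-\varepsilon/2)m+o(m)$, which would have sufficed.
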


\begin{proof}
Since $mp\sim d/(\log d)^{10}\to\infty$ and
$\log(mp)\sim\log d$, the hypotheses of
Lemma~\ref{lem:ordinary} hold for $F$.
Moreover, $r\to\infty$ and
$
m/r\ge mp/(2\log d)\to\infty.
$
Since $(1-\varepsilon/2)/(1-\varepsilon)>1$,
Lemma~\ref{lem:ordinary} implies that $F$ is
$\lfloor(1-\varepsilon/2)m/r\rfloor$-colourable
with high probability.

Let $Y$ be the minimum number of vertices whose
deletion makes $F$
$\lfloor(1-\varepsilon/2)m/r\rfloor$-colourable.
Then $\Prob(Y=0)\to1$, so $\E Y=o(m)$.
Changing the edges incident with one vertex
changes $Y$ by at most one.

If $\alpha(F)<r$, every colour class after these deletions
has at most $r-1$ vertices. Hence
\[
m-Y\le
\left\lfloor\frac{(1-\varepsilon/2)m}{r}\right\rfloor
(r-1)<(1-\varepsilon/2)m,
\]
so $Y>\varepsilon m/2$.
Since $\E Y\le\varepsilon m/4$ for sufficiently
large $m$, Lemma~\ref{lem:bounded-differences}
gives
\[
\Prob(\alpha(F)<r)
\le\Prob(Y-\E Y\ge\varepsilon m/4)
\le2e^{-\varepsilon^2m/8}
\le e^{-\varepsilon^2m/16}.
\]
\end{proof}

By Claim~\ref{clm:small-graph-tail} and
Lemma~\ref{lem:hereditary-comparison},
\[
\Prob(\alpha(G(t,p))<r)
\le\exp\left(-\frac{\varepsilon^2}{16}
\frac{t(t-1)}{m-1}\right)
\le\exp\left(-\frac{\varepsilon^2}{16}
n(\log d)^2\right),
\]
where the last inequality follows from
$
t(t-1)/(m-1)\ge t^2/m\ge n(\log d)^2.
$

For every fixed $t$-vertex set $U\subseteq[n]$,
the graph $G(n,p)[U]$ has the distribution $G(t,p)$.
Taking a union bound over at most $2^n$ such sets,
the probability that some $U$ contains no independent
set of size $r$ is at most
\[
\exp\left(n\log2-\frac{\varepsilon^2}{16}
n(\log d)^2\right)=o(1).
\]
Since every vertex set of size at least
$n/(\log d)^4$ contains a $t$-vertex set,
the proof is complete.
\end{proof}

\subsection{The completion lemma}

The preceding lemma allows us to colour a large independent
set whenever a colour appears in many lists. When no such
colour remains, the following lemma of Krivelevich and Vu
completes the list colouring.

\begin{lemma}[Krivelevich and Vu~{\cite[Lemma~3.1]{KV}}]
\label{lem:KV}
Let $p=p(n)\in(0,1)$ satisfy $np\to\infty$,
write $d=np$, and let $G=G(n,p)$.
 With high probability, the following
holds for every vertex subset $U\subseteq V(G)$
and every list assignment on $U$.
If each vertex of $U$ has at least
$\lceil d/(\log d)^2\rceil$ colours in its list
and each colour occurs in at most
$n/(\log d)^4$ lists, then $G[U]$
has a proper list colouring.
\end{lemma}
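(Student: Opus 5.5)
Lemma~\ref{lem:KV} is quoted from~\cite{KV}, so we do not reprove it here. If we had to, we would reconstruct it along the following lines.

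\textbf{Sparsity.} Write $\ell=\lceil d/(\log d)^2\rceil$. By discarding colours we may assume every list has size exactly $\ell$. For each colour $c$, let $V_c\subseteq U$ be the set of vertices whose list contains $c$, so $|V_c|\le n/(\log d)^4$. The only input from $G(n,p)$ should be a local sparsity property: with high probability, every set $W$ with $|W|\le n/(\log d)^4$ satisfies
\[
e(G[W])\le |W|\max\Bigl\{\frac{C d}{(\log d)^4},\,C\Bigr\}
\]
for an absolute constant $C$. The first term covers sets of size close to $n/(\log d)^4$, where the expected average degree is about $d/(\log d)^4$. The second covers small sets, where $G(n,p)$ has bounded density. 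We would prove this by a first-moment bound, $\binom{n}{w}\binom{\binom w2}{k}p^k$, summed over $w$. This is routine but must be done separately in the ranges of $d$ small and $d$ large. Crucially, this property is independent of $U$ and of the list assignment, which is why the statement is uniform over all of them.

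\textbf{Main step.} Let $H$ be the auxiliary graph on the pairs $(v,c)$ with $c\in L(v)$, joining $(v,c)$ to $(u,c)$ whenever $uv\in E(G)$. A proper list colouring is exactly an independent transversal of the blocks $\{(v,c):c\in L(v)\}$, each of size $\ell$. The degree of $(v,c)$ in $H$ is $\deg_{G[V_c]}(v)$, but summing over a block gives
\[
\sum_{c\in L(v)}\deg_{G[V_c]}(v)\le\deg_G(v),
\]
which can be of order $d\gg\ell$. So Haxell's theorem (block size at least twice the maximum degree) cannot be applied directly. We would therefore first delete colours to make $H$ sparse.

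For each $v$, remove from $L(v)$ every colour $c$ with $\deg_{G[V_c]}(v)>\ell/4$, and call $v$ \emph{bad} if this removes more than $\ell/2$ colours. A bad vertex has more than $\ell/2$ colours $c$ with $\deg_{G[V_c]}(v)>\ell/4$. Hence bad vertices carry total $H$-degree at least $\ell^2/8$ each. Since $\sum_c e(G[V_c])\le \ell n\cdot Cd/(\log d)^4$, the number of bad vertices is at most
\[
\frac{16Cd\,n}{\ell(\log d)^4}=O\!\left(\frac{n}{(\log d)^2}\right).
\]
Good vertices keep lists of size at least $\ell/2$ with maximum $H$-degree at most $\ell/4$, so Haxell's theorem colours them.

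\textbf{Main obstacle: the bad set.} It remains to handle the bad set $B$, which is the step we expect to be hardest. We would colour $B$ first and then the good vertices, deleting from each good list the colours used on its neighbours in $B$. This keeps the good lists large only if good vertices have few neighbours in $B$. We would try to iterate the argument: define the bad set $B$ first, colour $G[B]$ recursively using that $|B|$ is much smaller, so the sparsity bound improves for subsets of $B$. Good vertices with more than $\ell/8$ neighbours in $B$ would be added to $B$. Sparsity of $B\cup N(B)$-type sets should keep this closure of size $o(n/(\log d)^2)$. The details of this bootstrapping, and in particular making the density bound improve geometrically at each round when $d$ is small, are where we expect the real difficulty.
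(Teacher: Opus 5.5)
You rest on \cite{KV}, as the paper does, but you skip the one thing the paper actually has to check. Here the lemma is stated for every $p\in(0,1)$ with $np\to\infty$, and in a slightly different form from \cite[Lemma~3.1]{KV}. Krivelevich and Vu work in a restricted range: their theorem assumes $p\le 0.9$. Their proof also handles an exceptional vertex set, of size at most $n(\log d)^4/d^{3/2}$, via their Lemma~2.2, which carries a density assumption.

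The paper's justification is a range check on exactly this step. If $n(\log d)^4/d^{3/2}<1$, the exceptional set is empty and the step is omitted. This covers every $p>n^{-1/3}(\log d)^{8/3}$, in particular $p\to1$, which Proposition~\ref{prop:growing} needs. Otherwise $p\le n^{-1/3+o(1)}$, and the density hypothesis of their Lemma~2.2 holds. Declaring the lemma ``quoted, so we do not reprove it'' leaves it unjustified for large $p$.

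Your reconstruction does not fill this in either. The displayed inequality $\sum_{c\in L(v)}\deg_{G[V_c]}(v)\le\deg_G(v)$ is false: the left side equals $\sum_{u\in N(v)}|L(u)\cap L(v)|$, which can be as large as $\ell\deg_G(v)$. Since you use it only as motivation, this is minor.

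The real gap is the bad set, which you leave open, and the numbers you have do not support the bootstrapping you describe. Averaging over $\sum_c e(G[V_c])$ gives only $|B|\le 16Cn/(\log d)^2$, and no choice of thresholds compatible with Haxell's condition does better than this order. So $p|B|$ can be about $16C\ell$. Nothing then prevents most good vertices from having far more than $\ell/8$ neighbours in $B$; a typical set of that size has roughly $16C\ell$ neighbours at almost every vertex. Consequently colouring $B$ first can exhaust the good lists, and closing $B$ under ``more than $\ell/8$ neighbours in $B$'' can absorb almost all of $U$.

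Recursing on $G[B]$ does not address this. The obstruction is the edges between $B$ and the rest, not the density inside $B$. In \cite{KV} the exceptional set is polynomially smaller in $d$, of size at most $n(\log d)^4/d^{3/2}$, so that $p$ times its size is $(\log d)^4/\sqrt d=o(1)$, and it is dealt with by a separate lemma. Your threshold-and-average deletion produces nothing of that strength.
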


Our formulation differs slightly from that of
\cite[Lemma~3.1]{KV}, but their proof yields
the version stated above.
Their Lemma~2.2 is used to handle an exceptional
vertex set of size at most
$B=n(\log d)^4/d^{3/2}$.
If $B<1$, this set is empty and the step can be omitted.
If $B\ge1$, then $p\le n^{-1/3+o(1)}$, so the
density assumption of their Lemma~2.2 is satisfied
and the lemma applies.
Thus the argument works whenever $d=np\to\infty$.

\subsection{Two-stage list colouring}

We now combine the uniform independent-set estimate with
the completion lemma to prove Proposition~\ref{prop:growing}.

\begin{proof}[Proof of Proposition~\ref{prop:growing}]
Fix $\varepsilon\in(0,1)$ and let $r$ be as in
Lemma~\ref{lem:uniform-independent}.
Assume that the high-probability properties in
Lemmas~\ref{lem:ordinary}, \ref{lem:uniform-independent}
and~\ref{lem:KV} all hold for $G=G(n,p)$. We show that every list assignment
of size
\[
\left\lfloor\frac nr\right\rfloor
+\left\lceil\frac{d}{(\log d)^2}\right\rceil
\]
admits a proper colouring.

\smallskip
\noindent\textbf{Step 1: Colouring large independent sets.}
As long as some colour $c$ appears in the lists of
at least $n/(\log d)^4$ uncoloured vertices,
Lemma~\ref{lem:uniform-independent} gives an independent
set of $r$ such vertices. Colour them with $c$
and delete $c$ from every remaining list.
Each step colours $r$ vertices, so there are at most
$\lfloor n/r\rfloor$ steps.

\smallskip
\noindent\textbf{Step 2: Completing the colouring.}
When the process stops, let $U$ be the set of
uncoloured vertices. Every vertex in $U$ still has
at least $\lceil d/(\log d)^2\rceil$ available
colours, and each colour appears in fewer than
$n/(\log d)^4$ lists.
By Lemma~\ref{lem:KV}, $G[U]$ admits a proper list colouring.
Since the colours used in Step~1 were deleted from all remaining
lists, this gives a proper list colouring of $G$.
Consequently,
\[
\begin{aligned}
\ch(G(n,p))
&\le \frac nr+\frac{d}{(\log d)^2}+O(1)\\
&=\left(\frac1{1-\varepsilon}+o(1)\right)
\frac{n\log(1/(1-p))}{2\log d}
=\left(\frac1{1-\varepsilon}+o(1)\right)\chi(G(n,p)).
\end{aligned}
\]
Since $\varepsilon>0$ is arbitrary, the proof is complete.
\end{proof}
\section{From colourings to list colourings}\label{sec:lifting}

The proof of Theorem~\ref{prop:lifting} has two ingredients.
First, packings of independent sets allow us to colour almost
all vertices when each colour appears in few lists. We then use
the given colouring to obtain this condition and reserve colours
to finish the list colouring.

\subsection{A partial list-colouring lemma}

The vertices of an independent set can receive the same colour
if it belongs to all their lists. The following lemma shows that
if every large vertex set contains an independent
set of size $s$ and no colour occurs too often, then almost all
vertices can be coloured using such sets or individual vertices.

\begin{lemma}\label{lem:partial}
For every integer $s\ge1$ and $\varepsilon>0$,
there exists $\delta>0$ with the following property.
Let $N,D\ge1$ be integers, and let $G$ be an
$N$-vertex graph with lists of $D$ colours
at each vertex.

Suppose every colour occurs in at most $sD$ lists
and every vertex set of size greater than $\delta D$
contains an independent set of size $s$.
Then at least $(1-\varepsilon)N$ vertices of $G$
can be properly coloured from their lists, with
each colour used on either one vertex or an
independent set of size $s$.
\end{lemma}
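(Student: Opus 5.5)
The plan is to encode partial list colourings as matchings in an auxiliary hypergraph $H$ and then apply Kahn's matching theorem~\cite{KahnLP}, in its fractional form. That form says the following: if a hypergraph of bounded edge size carries a fractional matching $w$ with $\max_e w(e)\to0$ and with small \emph{pair weights} (total weight of edges through any fixed pair of vertices), then it has a matching covering all but a $o(1)$ fraction of the $w$-weight at every vertex.

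\textbf{The hypergraph.} The ground set of $H$ is $V(G)\cup C$, where $C$ is the set of all colours. Its edges are of two kinds:
\begin{itemize}
\item $I\cup\{c\}$, where $I$ is an independent set of size $s$ and $c\in\bigcap_{v\in I}L(v)$;
\item $\{v,c\}$, where $c\in L(v)$.
\end{itemize}
A matching in $H$ is exactly a partial colouring of the type required, since each colour is used once, on one vertex or on an independent $s$-set. So it suffices to build a fractional matching with the following properties:
\begin{enumerate}
\item every vertex of $G$ has weight at least $1-\varepsilon/2$;
\item every colour has weight at most $1$;
\item every edge has weight $o(1)$ as $\delta\to0$;
\item all pair weights are $o(1)$.
\end{enumerate}

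\textbf{Trivial cases.} If $s=1$, the lemma is Hall's theorem. Indeed, in the vertex--colour incidence graph every vertex has degree $D$ and every colour has degree at most $D$, so there is a matching saturating $V(G)$. If $s\ge2$, a single vertex contains no independent $s$-set, so the hypothesis forces $\delta D\ge1$. Hence $D\ge1/\delta$, and $D$ is large once $\delta$ is small.

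\textbf{Building the fractional matching.} For a colour $c$, let $V_c=\{v:c\in L(v)\}$, so $|V_c|\le sD$. Heuristically, each vertex sends weight $1/D$ to each of its $D$ colours. Colour $c$ then receives $|V_c|/D\le s$. If this weight is carried by $s$-sets, each $s$-set of weight $1/D$ covers $s$ units of vertex weight $1/D$, so the colour's total load is at most $|V_c|/(sD)\le1$.

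To realise this, I would proceed as follows for each $c$:
\begin{enumerate}
\item Choose a random partition of $V_c$ into blocks of size about $K$, where $K$ is a large constant.
\item Inside each block, greedily extract disjoint independent $s$-sets until fewer than $\delta D$ vertices of $V_c$ remain overall. Each extraction is possible by the hypothesis, applied to the set of unused vertices of $V_c$ as long as it has size greater than $\delta D$.
\item Put each leftover vertex into a singleton edge $\{v,c\}$.
\item Give every edge so produced the weight $1/D$, times the probability of its arising; that is, average over the randomness.
\end{enumerate}

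Checking the properties:
\begin{itemize}
\item The singletons add at most $\delta D\cdot(1/D)=\delta$ to the load of $c$. After scaling all weights by $1/(1+\delta)$, colour loads are at most $1$ and vertex weights are at least $1/(1+\delta)$.
\item Edge weights are at most $1/D\le\delta$.
\item A pair $(v,c)$ has weight at most $1/D$.
\item The delicate pairs are vertex pairs $(u,v)$. Summed over the up to $D$ common colours, a deterministic packing could put all of $u,v$'s weight into shared edges, giving pair weight $\Theta(1)$. The randomisation is meant to give, for each $c$, probability $O(s/|V_c|)$ or at least $o(1)$ that $u,v$ land in the same $s$-set. The pair weight is then $\sum_c (1/D)\cdot o(1)=o(1)$.
\end{itemize}

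\textbf{Main obstacle.} The hard step is this randomised packing. The greedy extraction inside a block may be forced to use specific pairs; for example, a vertex might have few non-neighbours. So "random block" alone does not obviously make every pair rare.

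What I would try first is to randomise the order in which independent sets are extracted. Concretely: repeatedly sample a uniformly random subset of size $\lceil\delta D\rceil+1$ from the unused part of $V_c$, and take an independent $s$-set inside it. The point is that a fixed vertex $u$ then enters the chosen set together with a given $v$ only if $v$ is sampled, which happens with probability about $\delta D/|V_c|$ per round. Summing over rounds and colours should give pair weight $O(\delta s)$.

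If this bound fails, the fallback is to redefine $H$ so that it has only edges whose pair weights are controlled. We would discard, for each $v$, the $o(D)$ colours where pair concentration occurs, absorbing the loss into $\varepsilon$. Once the properties of the fractional matching hold, Kahn's theorem yields a matching of $H$ covering at least $(1-\varepsilon)N$ vertices of $G$, which is the required partial colouring.
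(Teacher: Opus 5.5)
\textbf{The gap: small pair weights are not established.} Your framework matches the paper: the hypergraph on $V(G)\cup C$, Hall's theorem for $s=1$, the observation $\delta D\ge1$, and the reduction to a fractional matching with small codegrees. But the step that carries the content, making the vertex-pair weights $d_w(u,v)$ small, is not established, and the scheme you propose cannot work as stated.

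You require that, for every colour, $s$-sets cover all but fewer than $\delta D$ vertices of $V_c$, with every incidence carrying weight $1/D$. The hypothesis only gives $|V_c|\le sD$, so $|V_c|$ may be comparable to $\delta D$, and then the $s$-sets are forced. Here is an example:
\begin{itemize}
\item Let $s=2$, $m=\lfloor\delta D\rfloor$, and let $G$ be $K_{2m}$ minus a perfect matching $\{u_iv_i\}$.
\item Give every vertex the same list of $D$ colours.
\item Every set of more than $\delta D$ vertices contains some pair $\{u_i,v_i\}$, and these are the only independent $2$-sets.
\item Every colour has $V_c=V(G)$.
\end{itemize}
Any partition leaving fewer than $\delta D$ singletons must use more than $(m-1)/2$ of the $m$ pairs. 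So, however you randomise, the average over $i$ of $d_w(u_i,v_i)$ is about $1/2$. This happens in every colour simultaneously, so your fallback of discarding $o(D)$ colours per vertex does not help.

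Your heuristic also fails in general, for two reasons. The unused set $W$ shrinks to size about $\delta D$, so in late rounds the sample $T$ is almost all of $W$. And the $s$-set chosen inside $T$ depends on $T$, so the events ``$v\in T$'' and ``$u$ is selected'' are not independent. For instance, if $v$ is $u$'s only non-neighbour in $V_c$, then $u$ is selected only together with $v$. Finally, your bound $\sum_c\frac1D\cdot\frac{\delta D}{|V_c|}=O(\delta s)$ tacitly assumes $|V_c|=\Omega(D)$.

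\textbf{The missing idea: subsample before partitioning.} This decouples the coverage density from the absolute leftover bound. Put each vertex into a random set $U$ independently with probability $\sqrt\delta$. For each colour $c$, take any maximal packing of independent $s$-sets in $G[S_c\cap U]$, plus singletons, to form $\mathcal P_c$. Set $w(Q\cup\{c\})=\Prob(Q\in\mathcal P_c)/((\sqrt\delta+\delta)D)$. Then:
\begin{itemize}
\item Maximality and the hypothesis leave at most $\delta D$ singletons, so $d_w(c)\le(\sqrt\delta D+\delta D)/((\sqrt\delta+\delta)D)=1$.
\item Every vertex has $d_w(v)=1/(1+\sqrt\delta)$.
\item Two vertices $x,y$ can share a part only if both lie in $U$, so $d_w(x,y)\le D\delta/((\sqrt\delta+\delta)D)\le\sqrt\delta$.
\end{itemize}
This requires no control over how the packing inside $S_c\cap U$ is chosen. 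In the example above it makes essentially every part a singleton, since $|S_c\cap U|\approx2\delta^{3/2}D<\delta D$.
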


For $s=1$, Hall's marriage theorem gives a proper
colouring of all vertices from their lists.
For $s\ge2$, we prove this using Kahn's hypergraph weighted matching theorem.
In our application, the auxiliary hypergraph (not necessarily uniform) $\mathcal H$
has vertex set $V(G)\cup C$, where
$C=\bigcup_{v\in V(G)}L(v)$ is the union of all colour lists.
Each hyperedge has the form $Q\cup\{c\}$, where
$Q\subseteq V(G)$ is independent, $|Q|\in\{1,s\}$,
and $c$ belongs to the list of every vertex in $Q$.
A matching selects pairwise disjoint independent sets $Q$
with distinct colours $c$. Colouring each selected $Q$
with its corresponding colour $c$ therefore gives a proper
list colouring of these vertices in $G$.
Our goal is to find a matching covering almost all vertices of $G$.

Let $w:E(\mathcal H)\to[0,1]$ assign a weight
to each hyperedge. For distinct vertices
$x,y\in V(\mathcal H)$, define
\[
d_w(x)=\sum_{e\ni x}w(e),\qquad
d_w(x,y)=\sum_{e\supseteq\{x,y\}}w(e).
\]
We call $w$ a \emph{fractional matching} if
$d_w(x)\le1$ for every $x\in V(\mathcal H)$.
We shall use the following form of Kahn's matching theorem.

\begin{theorem}[Kahn~{\cite[Theorem~1.5]{KahnLP}}]
\label{thm:kahn}
Fix an integer $b\ge1$. Let $\mathcal H$ be a
hypergraph with $|e|\le b$ for every
$e\in E(\mathcal H)$, and let $w$ be a fractional
matching. Suppose that $X\subseteq V(\mathcal H)$
satisfies $|X|\to\infty$ and
\[
\min_{x\in X}d_w(x)=1-o(1),\qquad
\max_{\substack{x,y\in V(\mathcal H), \ x\ne y}}
d_w(x,y)=o(1).
\]
Then $\mathcal H$ has a matching covering
$|X|-o(|X|)$ vertices of $X$.
\end{theorem}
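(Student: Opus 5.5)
Although Theorem~\ref{thm:kahn} is quoted from Kahn, this is how I would prove it directly, using the semi-random (R\"odl nibble) method adapted to weights. Write $\gamma=\max_{x\ne y}d_w(x,y)=o(1)$ and $1-\eta=\min_{x\in X}d_w(x)$ with $\eta=o(1)$. Fix a small constant $\theta>0$, and at the end let $\theta\to0$ slowly.

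\textbf{One round.} Select each edge $e$ independently with probability $\theta w(e)$. Keep a selected edge if it meets no other selected edge, and add the kept edges to the matching. For a vertex $x\in X$, the probability that $x$ is covered in this round is about
\[
\sum_{e\ni x}\theta w(e)\prod_{f\cap e\ne\emptyset,\,f\ne e}(1-\theta w(f))\approx \theta e^{-\theta b}\cdot(1-O(\theta)) .
\]
Here the product is controlled because $|e|\le b$ and $d_w\le1$. The codegree bound $\gamma$ makes the events for different edges through $x$ nearly independent.

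\textbf{Reweighting.} Delete the covered vertices and every edge meeting them. On each surviving edge, rescale the weight by roughly $1/(\text{survival probability of the neighbourhood})$, so that the new weight function $w'$ is again a fractional matching up to a factor $1+O(\theta^2)$. Then check that, for every surviving $x\in X$, $d_{w'}(x)=1-o(1)$ still holds, and that codegrees remain $O(\gamma)$.

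\textbf{Concentration.} The degree checks need concentration. The quantity $d_{w'}(x)$ depends on the independent choices of edges at distance at most $2$ from $x$. Each individual choice changes it by an amount controlled by the codegree bound. I would therefore apply a bounded-differences or Talagrand-type inequality, in variance form, whose variance proxy is $O(\gamma)$. This gives deviations $o(1)$ with probability $1-o(1)$ for each $x$. The statement only needs \emph{most} of $X$ covered, not all of it, so no union bound over all vertices is required. It suffices to bound the expected number of ``bad'' vertices of $X$ by $o(|X|)$ and discard them; this is exactly where $|X|\to\infty$ is used.

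\textbf{Iterating.} Repeat for $T=T(\theta)$ rounds, long enough that the expected uncovered fraction of $X$, about $(1-\theta e^{-b\theta}+O(\theta^2))^T$, is below any fixed $\epsilon$. Because $T$ is fixed while $\eta,\gamma\to0$, the accumulated errors stay $o(1)$. Letting $\epsilon\to0$ slowly then yields a matching covering $|X|-o(|X|)$ vertices of $X$.

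The main obstacle is the reweighting step. The weights are arbitrary, so some vertices may lie in many tiny-weight edges, which makes the survival probabilities non-uniform. I would handle this by rescaling each edge by the product of per-vertex survival factors, mirroring Kahn's linear-programming/weighted nibble. I expect the proof that this keeps $d_{w'}(x)\le1+O(\theta^2)$ for all $x$, and not just for $x\in X$, to be the delicate computation.

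An alternative I would try first, if it works, avoids the nibble altogether. Approximate $w$ by a multihypergraph with $\lfloor Kw(e)\rfloor$ copies of each edge, pad degree deficits of vertices outside $X$ using edges through fresh dummy vertices, and invoke the non-uniform Pippenger--Spencer theorem. The risk is the rounding loss from many tiny weights. I would try randomized rounding to control it, but concentration may fail exactly when the degrees contributed by tiny-weight edges are small.
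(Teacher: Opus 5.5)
The paper gives no proof of this statement. It quotes Kahn's theorem and uses it as a black box in the proof of Lemma~\ref{lem:partial}, so there is no in-paper argument to compare with.

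Judged on its own, your weighted nibble has the right architecture, but the passage from one round to the next is not sound as written. Each round must output a genuine fractional matching, i.e.\ $d_{w'}(v)\le 1+o(1)$ at \emph{every} vertex of $\mathcal H$, including vertices outside $X$. The next round uses this bound at all vertices of every edge. It gives $\prod_{f\cap e\ne\emptyset,\,f\ne e}(1-\theta w(f))\ge e^{-b\theta-O(\theta^2)}$, it gives $p_v:=\Prob(v\text{ covered})\le\theta$, and it keeps the rescaling factors $O(1)$.

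Since $\gamma$ is unrelated to $|V(\mathcal H)|$, concentration only gives each vertex a failure probability of $o(1)$. You explicitly forgo a union bound and discard only bad vertices of $X$. That protects the lower bound on $X$, but the upper bound may still fail at a large number of vertices, possibly all outside $X$, where you have no control.

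The missing step is a truncation:
\begin{enumerate}[(i)]
\item after each round, set $w'(e)=0$ for every edge containing a vertex $v$ with $d_{w'}(v)>1+\epsilon_1$;
\item divide the remaining weights by $1+\epsilon_1$, where $\epsilon_1=o(1)$ is chosen so that $\Prob(v\text{ bad})=o(1)$ uniformly in $v$.
\end{enumerate}
This only lowers degrees and codegrees. The weight removed at a fixed $x\in X$ has expectation at most $O_b(1)\max_v\Prob(v\text{ bad})=o(1)$. By Markov's inequality, all but $o(|X|)$ vertices of $X$ therefore lose only $o(1)$. With this added, your bookkeeping via expected numbers of bad vertices goes through.

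There are two further corrections. First, rescaling by the full product $\prod_{v\in e}(1-p_v)^{-1}$ gives $\E[d_{w'}(x)\mid x\text{ survives}]\approx d_w(x)/(1-p_x)\approx 1+\theta$ on $X$. That is a first-order overshoot, not $1+O(\theta^2)$. Use $\lambda_e=(1-\theta)\prod_{v\in e}(1-p_v)^{-1}$ instead. Since $p_x\le\theta d_w(x)\le\theta$, the expected new degree is then at most $d_w(x)(1-\theta)/(1-p_x)\le d_w(x)$ at every vertex. On $X$, $p_x\ge\theta(1-\eta)e^{-b\theta(1+\theta)}$, so the loss is only $O(\theta^2+\theta\eta)$ per round. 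The computation you flagged as delicate is therefore routine in expectation; the real difficulty is the pointwise enforcement above.

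Second, worst-case bounded differences fail, because toggling one edge can un-keep arbitrarily many selected edges meeting it. You only need probability $1-o(1)$ per vertex, so Chebyshev suffices, with the variance bound $O_b(\gamma)$ that follows from the codegree condition. This also permits $\epsilon_1=\gamma^{1/3}$ in the truncation. Incidentally, the hypothesis $|X|\to\infty$ plays no role in your argument, since linearity of expectation gives $o(|X|)$ regardless.

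In your alternative route, rounding is harmless: $\mathcal H$ is finite, so $K$ can be taken huge. The real obstruction there is that Pippenger--Spencer bounds only the total number of uncovered vertices of the padded hypergraph, which can far exceed $|X|$. You would need a version in which each vertex is missed with small probability, and that is exactly what the nibble itself provides.
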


The remaining task is to construct a fractional
matching satisfying the conditions of
Theorem~\ref{thm:kahn}.
A natural idea is to partition each
$S_c:=\{v\in V(G):c\in L(v)\}$ into
independent sets of size $s$ and one-vertex sets,
and assign weights to these parts.
This gives weighted degrees close to one at
the vertices of $G$, but the same pair of vertices
may appear together in many parts, resulting
in large weighted codegrees.
We overcome this difficulty by randomly sampling
vertices before forming the partitions.

\begin{proof}[Proof of Lemma~\ref{lem:partial}]
Set $C=\bigcup_{v\in V(G)}L(v)$.
If $s=1$, consider the bipartite graph with vertex classes
$V(G)$ and $C$, where
$v$ is adjacent to $c$ when $c\in L(v)$.
For every $W\subseteq V(G)$, we have
\[
D\left|\bigcup_{v\in W}L(v)\right|
\ge\sum_{c\in\bigcup_{v\in W}L(v)}
|\{v\in W:c\in L(v)\}|
=D|W|.
\]
Thus Hall's theorem
gives distinct allowed colours for all vertices.

We may therefore assume $s\ge2$ and $N>D$.
Suppose for a contradiction that the lemma fails.
Then there exist fixed $s\ge2$ and $\varepsilon_0>0$
such that, for every $\delta>0$, there is an 
$N$-vertex counterexample $G$ with a list assignment $L$.
We now let $\delta\to0$. Our hypothesis implies $\delta D\ge1$, which implies $D,N\to\infty$
as $\delta\to0$.

\smallskip
\noindent\textbf{Step 1: Sampling and partitioning.}
For each $c\in C$, put
$S_c=\{v\in V(G):c\in L(v)\}$, so $|S_c|\le sD$.

Form a random set $U\subseteq V(G)$ by including
each vertex independently with probability $\sqrt{\delta}$.
For each colour $c$, choose uniformly at random
a maximal family of pairwise disjoint independent
sets of size $s$ in $G[S_c\cap U]$.
Let $B_c$ be the set of vertices in $S_c\cap U$
not covered by these independent sets.
Since $\alpha(G[B_c])<s$,
we have $|B_c|\le\delta D$.

Let $\mathcal P_c$ be the partition of $S_c\cap U$
consisting of the selected independent sets and the
one-vertex sets from $B_c$.
Then the number of parts satisfies $|\mathcal P_c|\le |S_c\cap U|/s+\delta D$,
and taking expectations gives
\[
\E|\mathcal P_c|
\le\frac{\sqrt{\delta}|S_c|}{s}+\delta D
\le(\sqrt{\delta}+\delta)D.
\]

\smallskip
\noindent\textbf{Step 2: Constructing a fractional matching.}
Define a hypergraph $\mathcal H$ on $V(G)\cup C$
whose edges are all sets $Q\cup\{c\}$, where
$c\in C$, $Q\subseteq S_c$, and $Q$ is either
a singleton or an independent set of size $s$ in $G[S_c]$.
Assign each edge the weight
\[
w(Q\cup\{c\})
=\frac{\Prob(Q\in\mathcal P_c)}
{(\sqrt{\delta}+\delta)D}.
\]

For every $v\in V(G)$ and $c\in L(v)$,
since $\mathcal P_c$ partitions $S_c\cap U$,
the events $\{Q\in\mathcal P_c\}$ over all
$Q\ni v$ are pairwise disjoint, and their
union is exactly $\{v\in U\}$. Hence
$
\sum_{Q\ni v}\Prob(Q\in\mathcal P_c)
=\Prob(v\in U)=\sqrt{\delta}.
$
 Consequently,
\[
\begin{aligned}
d_w(v)
=\sum_{c\in L(v)}\sum_{Q\ni v}w(Q\cup\{c\})=\frac{\sum_{c\in L(v)}\Prob(v\in U)}
{(\sqrt{\delta}+\delta)D}
=\frac1{1+\sqrt{\delta}}\ \ \ \text{ and }\ \ \ d_w(c)=\frac{\E|\mathcal P_c|}
{(\sqrt{\delta}+\delta)D}\le1.
\end{aligned}\]
Thus $w$ is a fractional matching, and
$d_w(v)=1-o(1)$ for every original vertex.

\smallskip
\noindent\textbf{Step 3: Applying Kahn's theorem.}
We verify the weighted codegree condition.
For distinct $x,y\in V(G)$, a part of
$\mathcal P_c$ can contain both vertices only if
$x,y\in U$. Hence
\[
\begin{aligned}
d_w(x,y)
&=\sum_{c\in L(x)\cap L(y)}
  \sum_{Q\supseteq\{x,y\}}w(Q\cup\{c\})=\frac{
  \sum_{c\in L(x)\cap L(y)}
  \sum_{Q\supseteq\{x,y\}}
  \Prob(Q\in\mathcal P_c)}
  {(\sqrt{\delta}+\delta)D}\\&\le\frac{|L(x)\cap L(y)|\Prob(x,y\in U)}
  {(\sqrt{\delta}+\delta)D}\le\frac{D\delta}{(\sqrt{\delta}+\delta)D}
\le\sqrt{\delta}=o(1).
\end{aligned}
\]

For $v\in V(G)$ and $c\in L(v)$, we have
\[
\begin{aligned}
d_w(v,c)
&=\sum_{Q\ni v}w(Q\cup\{c\})=\frac{\Prob(v\in U)}{(\sqrt{\delta}+\delta)D}
=\frac1{(1+\sqrt{\delta})D}
\le\frac1D=o(1).
\end{aligned}
\]
If $c\notin L(v)$, then $d_w(v,c)=0$.

Applying Theorem~\ref{thm:kahn} with $X=V(G)$,
we obtain a matching covering $N-o(N)$
original vertices.
For each matched edge $Q\cup\{c\}$, colour $Q$
with $c$. This properly colours $N-o(N)$ vertices
of $G$ from their lists, with each colour class
having size $1$ or $s$.
This contradicts our choice of counterexamples
and completes the proof.
\end{proof}

\subsection{From a given colouring to a list colouring}

We now prove Theorem~\ref{prop:lifting}.
The main idea is to use the given colouring to
reduce the number of lists containing each colour,
so that Lemma~\ref{lem:partial} can be applied.

\begin{proof}[Proof of Theorem~\ref{prop:lifting}]
Fix $s\ge1$ and $\varepsilon>0$, and choose
$0<\varepsilon'<\varepsilon/10$.
Let $\delta'>0$ be given by Lemma~\ref{lem:partial}
with parameter $s$ and error $\varepsilon'$.
Set $\delta=\varepsilon'\delta'$ and take $n$
sufficiently large.

Let $G$ be an $n$-vertex graph with a proper
colouring into $k$ classes, each of size at most $s+1$.
Suppose every vertex set of size greater than
$\delta n$ contains an independent set of size $s$.
Give each vertex an arbitrary list of
$k+\lceil4\varepsilon' n\rceil$ colours.

\smallskip
\noindent\textbf{Step 1: Reducing colour occurrences.}
The given colouring partitions $V(G)$ into $k$
independent sets.
As long as there is a remaining colour class $P$ and
a colour $c\in\bigcap_{v\in P}L(v)$,
colour all vertices of $P$ with $c$, remove $P$,
and delete $c$ from every remaining list.

If all vertices are coloured, we are done.
Otherwise, let $k'$ be the number of remaining
colour classes and $G'$ the subgraph induced by their union.
Since $k-k'$ distinct colours have been used,
each remaining list has at least
$k'+\lceil4\varepsilon' n\rceil$ colours.

By construction, no remaining class has a common
available colour. Since each class has at most
$s+1$ vertices, every colour occurs in at most
$s$ lists within each class, and hence in at most
$sk'$ remaining lists.
Shorten all remaining lists to exactly
$k'+\lceil4\varepsilon' n\rceil$ colours.

\smallskip
\noindent\textbf{Step 2: Reserving colours.}
We now set aside some colours to colour the vertices
that may remain uncoloured after applying
Lemma~\ref{lem:partial}.
Let $R$ be a random set obtained by including
each colour appearing in the remaining lists
independently with probability
$
2\varepsilon' n/(k'+\lceil4\varepsilon' n\rceil).
$
These colours will not be used until the final step.

Since every remaining list has exactly
$k'+\lceil4\varepsilon' n\rceil$ colours,
the random variable $|L(v)\cap R|$ is binomial
with mean $2\varepsilon' n$ for every $v\in V(G')$.
By the Chernoff bound and a union bound,
\[
\Prob\bigl(\exists v\in V(G'):
|L(v)\cap R|\notin[\varepsilon' n,3\varepsilon' n]\bigr)
\le2n\exp(-\varepsilon' n/6)=o(1).
\]

Thus we may fix $R$ such that, simultaneously
for every $v\in V(G')$,
\begin{equation}\label{eq:reserve}
\varepsilon' n\le|L(v)\cap R|\le3\varepsilon' n.
\end{equation}
Consequently, every vertex $v\in V(G')$ has at least
\[
|L(v)\setminus R|
\ge k'+\lceil4\varepsilon' n\rceil
-\lfloor3\varepsilon' n\rfloor
\ge k'+\lceil\varepsilon' n\rceil
\]
available colours outside $R$.

\smallskip
\noindent\textbf{Step 3: Partial colouring and completion.}
Shorten each list outside $R$ to exactly
$k'+\lceil\varepsilon' n\rceil$ colours.
Every colour outside $R$ occurs in at most
$sk'\le s(k'+\lceil\varepsilon' n\rceil)$ lists.
Moreover, since
$\delta'(k'+\lceil\varepsilon' n\rceil)\ge \delta n$,
every vertex set of $G'$ of size greater than
$\delta'(k'+\lceil\varepsilon' n\rceil)$ contains
an independent set of size $s$.

Thus Lemma~\ref{lem:partial} gives a proper
partial colouring of $G'$ leaving at most
$\varepsilon' n$
vertices uncoloured.
By~\eqref{eq:reserve}, each uncoloured vertex
has at least $\varepsilon' n$ colours in $R$.
Since no colour in $R$ has been used, we can
colour these vertices greedily with distinct
reserved colours.

Together with the colouring from Step~1,
this gives a proper colouring of $G$ from
arbitrary lists of size $k+\lceil4\varepsilon' n\rceil$.
Since $4\varepsilon'<\varepsilon$, for sufficiently large $n$,
$
\ch(G)\le k+\lceil4\varepsilon' n\rceil
\le k+\varepsilon n.
$
\end{proof}

\section{Proof of the main theorem}\label{sec:proof}

Proposition~\ref{prop:growing} and Theorem~\ref{prop:lifting}
provide the two colouring arguments.
To apply the latter to $G(n,p)$, we need to control
both the vertices belonging to large independent sets
and the occurrence of smaller independent sets in
every sufficiently large vertex set.

\subsection{Independent sets of fixed size in random graphs}

Note that the expected number of independent sets of size $s$
in $G(n,p)$ is $\binom ns(1-p)^{\binom s2}$, which is linear
in $n$ when $(1-p)n^{2/s}=\Theta(1)$.
The following lemma shows that below this scale, only $o(n)$
vertices belong to such an independent set, whereas above it,
every vertex set containing a fixed positive fraction of the
vertices contains one.

\begin{lemma}\label{lem:random-independent-sets}
Fix an integer $s\ge2$, and let $G=G(n,p)$, where
$p=p(n)\in(0,1)$. Let $Z$ be the set of vertices
belonging to an independent set of size $s$ in $G$.
If $(1-p)n^{2/s}\to0$, then with high probability
$|Z|=o(n)$.
If $(1-p)n^{2/s}\to\infty$, then, for every fixed
$\varepsilon>0$, with high probability every vertex set
of size at least $\varepsilon n$ contains an independent
set of size $s$.
\end{lemma}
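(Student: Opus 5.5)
Both parts follow from moment methods applied to $X_s$, the number of independent $s$-sets. Write $q=1-p$, so that $\E X_s=\binom ns q^{\binom s2}=\Theta\bigl(n(qn^{2/s})^{\binom s2}\cdot n^{s-1-\binom s2\cdot 2/s}\bigr)$. Since $\binom s2\cdot 2/s=s-1$, this gives $\E X_s=\Theta\bigl(n\,(qn^{2/s})^{\binom s2}\bigr)$.

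\textbf{First part.} Suppose $qn^{2/s}\to0$. Each independent $s$-set contributes at most $s$ vertices to $Z$, so $|Z|\le sX_s$. Hence $\E|Z|\le s\E X_s=O\bigl(n(qn^{2/s})^{\binom s2}\bigr)=o(n)$, because $\binom s2\ge1$. Markov's inequality then gives $|Z|=o(n)$ with high probability. For instance, take the threshold $n\sqrt{(qn^{2/s})^{\binom s2}}$, which is $o(n)$, and the failure probability is at most $O\bigl(\sqrt{(qn^{2/s})^{\binom s2}}\bigr)=o(1)$.

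\textbf{Second part.} Suppose $qn^{2/s}\to\infty$. Fix $W$ with $|W|=\lceil\varepsilon n\rceil$; it suffices to handle sets of exactly this size. I will bound $\Prob(\alpha(G[W])<s)\le e^{-\omega(n)}$ using Janson's inequality, and then take a union bound over at most $2^n$ choices of $W$. For each $s$-subset $A\subseteq W$ let $B_A$ be the event that $A$ is independent; these are increasing events in the non-edges. Set
\[
\mu=\binom{|W|}{s}q^{\binom s2}=\Theta\bigl(n(qn^{2/s})^{\binom s2}\bigr),
\]
so $\mu/n\to\infty$. Let $\Delta$ be the sum over pairs $A\ne A'$ sharing $j$ vertices, $2\le j\le s-1$. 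Then
\[
\Delta=\sum_{j=2}^{s-1}O\bigl(n^{2s-j}q^{2\binom s2-\binom j2}\bigr)=\mu^2\sum_{j}O\bigl(n^{-j}q^{-\binom j2}\bigr).
\]
Janson's inequality gives $\Prob(\text{no }B_A)\le\exp\bigl(-\min(\mu/2,\mu^2/(2\Delta))\bigr)$, so I need both $\mu=\omega(n)$, which we have, and $\mu^2/\Delta=\omega(n)$.

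The latter reduces to showing $n^{j}q^{\binom j2}=\omega(n)$ for each $2\le j\le s-1$. That is, $n^{j-1}q^{\binom j2}=(n^{2/j}q)^{\binom j2}\to\infty$ must hold for each such $j$.

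\textbf{Main obstacle.} This is the one delicate step. The trouble is that $q$ may be close to $1$ (e.g.\ $p\to0$), and we only know $q\gg n^{-2/s}$. However, $2/j>2/s$ for $j<s$, so $n^{2/j}q\ge n^{2/j-2/s}\cdot qn^{2/s}\to\infty$. Hence every term is fine, and $\mu^2/\Delta=\omega(n)$.

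If $p$ is tiny, the same inequalities still hold, as does the trivial case where $\Delta$ is dominated by $\mu$. Since $q\le1$, no upper issue arises. Therefore $\Prob(\alpha(G[W])<s)\le e^{-\omega(n)}$, and the union bound over at most $2^n$ sets $W$ yields $o(1)$ failure probability, completing the proof.
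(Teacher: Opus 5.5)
Your proposal is correct and follows the paper's route. Part one is the same first-moment bound plus Markov. Part two is Janson's inequality on the random set of non-edges with the key estimate $n^{j-1}(1-p)^{\binom j2}\to\infty$, followed by a union bound over $\lceil\varepsilon n\rceil$-sets. The only cosmetic difference is that you use the $\min(\mu/2,\mu^2/(2\Delta))$ form with $\Delta$ over distinct pairs, so the paper's diagonal $j=s$ term becomes your separate check that $\mu=\omega(n)$.
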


The first assertion follows from the first moment
method. For the second, we use the following form
of Janson's inequality.

\begin{lemma}[\L uczak and Ruci\'nski~{\cite[Theorem~2.14]{JLR}}]
\label{lem:janson}
Let $\Omega$ be a finite set, and let $\Omega_\theta$ include
each element independently with probability $\theta\in[0,1]$.
Let $\mathcal A$ be a family of nonempty subsets of $\Omega$,
and let $I_A$ be the indicator of $A\subseteq\Omega_\theta$.
Put $X=\sum_{A\in\mathcal A}I_A$ and $\mu=\E X>0$. Then
\[
\Prob(X=0)\le\exp\left(-\frac{\mu^2}{2\bar\Delta}\right),
\qquad
\bar\Delta=\sum_{\substack{A,B\in\mathcal A\\A\cap B\ne\varnothing}}
\E[I_AI_B],
\]
where the sum is over ordered pairs, including $A=B$.
\end{lemma}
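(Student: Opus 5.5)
We prove the statement in two stages: first the basic Janson inequality with the off-diagonal correlation sum, and then a sparsification step that converts it into the stated form with $\bar\Delta$. Write $\mathcal A=\{A_1,\dots,A_M\}$ and $\Delta=\sum_{i\ne j,\,A_i\cap A_j\ne\varnothing}\E[I_{A_i}I_{A_j}]$, summed over ordered pairs. Then $\bar\Delta=\mu+\Delta$, because the diagonal terms contribute $\sum_A\E[I_A^2]=\mu$.

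\textbf{Stage 1: $\Prob(X=0)\le\exp(-\mu+\Delta/2)$.} Let $B_i$ be the event $A_i\subseteq\Omega_\theta$, which is increasing. By the chain rule,
$$\Prob(X=0)=\prod_i\bigl(1-\Prob(B_i\mid \textstyle\bigcap_{j<i}\overline{B_j})\bigr)\le\exp\Bigl(-\sum_i\Prob(B_i\mid \textstyle\bigcap_{j<i}\overline{B_j})\Bigr),$$
using $1-x\le e^{-x}$. Fix $i$. Split the indices $j<i$ into those with $A_j\cap A_i\ne\varnothing$, giving $D=\bigcap\overline{B_j}$ over these, and the rest, giving $E=\bigcap\overline{B_j}$ over those. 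Then
$$\Prob(B_i\mid D\cap E)\ge\Prob(B_i\cap D\mid E)=\Prob(B_i\mid E)\,\Prob(D\mid B_i\cap E).$$
Since $B_i$ depends only on the coordinates in $A_i$ and $E$ only on coordinates outside $A_i$, we have $\Prob(B_i\mid E)=\Prob(B_i)$. Moreover,
$$\Prob(D\mid B_i\cap E)\ge1-\sum_{j}\Prob(B_j\mid B_i\cap E),$$
the sum running over the $j$ in $D$. Conditioning on $B_i$ amounts to fixing the coordinates in $A_i$ to be present. In that product space $B_j$ is increasing and $E$ is decreasing, so Harris' inequality gives $\Prob(B_j\mid B_i\cap E)\le\Prob(B_j\mid B_i)$. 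Hence
$$\Prob(B_i\mid D\cap E)\ge\Prob(B_i)-\sum_{j<i,\ A_j\cap A_i\ne\varnothing}\Prob(B_i\cap B_j).$$
Summing over $i$, each unordered intersecting pair is counted once, which is $\Delta/2$. This gives Stage 1. This Harris-inequality step is the only delicate point: one has to check that the monotonicity directions survive conditioning on $B_i$.

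\textbf{Stage 2: sparsification.} For any subfamily $\mathcal A'\subseteq\mathcal A$, we have $\{X=0\}\subseteq\{X'=0\}$, where $X'=\sum_{A\in\mathcal A'}I_A$. Stage 1 applied to $\mathcal A'$ then gives $\Prob(X=0)\le\exp(-\mu'+\Delta'/2)$, where $\mu'$ and $\Delta'$ are the corresponding quantities for $\mathcal A'$.

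Now choose $\mathcal A'$ at random, keeping each member independently with probability $q\in[0,1]$. The first sum is linear and the off-diagonal pairs survive with probability $q^2$, so
$$\E[\mu'-\Delta'/2]=q\mu-q^2\Delta/2.$$
Hence some fixed subfamily achieves $\mu'-\Delta'/2\ge q\mu-q^2\Delta/2$.

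Take $q=\mu/\bar\Delta=\mu/(\mu+\Delta)\le1$. Then
$$q\mu-\frac{q^2\Delta}{2}=\frac{\mu^2}{\bar\Delta}-\frac{\mu^2\Delta}{2\bar\Delta^2}\ge\frac{\mu^2}{\bar\Delta}-\frac{\mu^2}{2\bar\Delta}=\frac{\mu^2}{2\bar\Delta},$$
where the inequality uses $\Delta\le\bar\Delta$. Therefore $\Prob(X=0)\le\exp(-\mu^2/(2\bar\Delta))$, as claimed.
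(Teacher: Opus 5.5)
Your argument is correct. The paper gives no proof of this lemma; it quotes it from JLR, and the standard proof of that result is Janson's exponential-moment method. Your route is different from that one.

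\textbf{The JLR route.} One sets $\Psi(s)=\E e^{-sX}$ and writes $X=Y_A+Z_A$ according to whether a member meets $A$. Harris' inequality conditionally on $I_A=1$, together with two applications of Jensen's inequality, gives $-(\log\Psi)'(s)\ge\mu e^{-s\bar\Delta/\mu}$. Integrating and applying Markov's inequality yields the full lower tail $\Prob(X\le\mu-t)\le\exp(-t^2/(2\bar\Delta))$ for $0\le t\le\mu$. For the event $X=0$, letting $s\to\infty$ even gives $\exp(-\mu^2/\bar\Delta)$.

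\textbf{Your route.} You combine the Boppana--Spencer chain-rule proof of $\Prob(X=0)\le e^{-\mu+\Delta/2}$ with the Alon--Spencer random-subfamily deletion. Your choice $q=\mu/\bar\Delta$ removes the usual restriction $\Delta\ge\mu$.

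\textbf{Comparison.} Your argument is more elementary, using only the chain rule, a union bound and Harris' inequality. It gives exactly what the paper needs, since the application only uses $X=0$ with the weaker constant. However, it controls only the event $X=0$, not the lower tail $X\le\mu-t$. It also loses up to a factor $2$ in the exponent relative to the Laplace-transform bound.

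\textbf{One detail to make explicit.} The chain rule needs $\Prob\bigl(\bigcap_{j<i}\overline{B_j}\bigr)>0$. For $\theta<1$ this holds because the empty configuration has probability $(1-\theta)^{|\Omega|}>0$ and lies in every $\overline{B_j}$; this is where nonemptiness of the sets is used. For $\theta=1$ the statement is trivial, since then $\Prob(X=0)=0$.
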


\begin{proof}[Proof of Lemma~\ref{lem:random-independent-sets}]
\noindent\textbf{Case 1: $(1-p)n^{2/s}\to0$.}
Let $N_s$ count the independent sets of size $s$ in $G$.
\[
\E|Z|
\le s\E N_s
=s\binom ns(1-p)^{\binom s2}
\le sn\bigl((1-p)n^{2/s}\bigr)^{\binom s2}
=o(n).
\]
Markov's inequality gives $|Z|=o(n)$ with high probability.

\smallskip
\noindent\textbf{Case 2: $(1-p)n^{2/s}\to\infty$.}
Fix $\varepsilon\in(0,1)$ and a set
$U\subseteq[n]$ of size $m=\lceil\varepsilon n\rceil$.
Let $X$ count the independent sets of size $s$ in $G[U]$.
Apply Lemma~\ref{lem:janson} to the random set of absent
pairs in $\binom U2$, each of which is absent independently
with probability $1-p$.
For each $s$-vertex set, the required absent pairs are
all pairs of its vertices. Thus
$\mu=\binom ms(1-p)^{\binom s2}$.

Two such sets of pairs intersect precisely when the
corresponding vertex sets share at least two vertices.
Counting ordered pairs according to the number
$j$ of common vertices gives
\[
\bar\Delta
=\sum_{j=2}^{s}
\binom ms\binom sj\binom{m-s}{s-j}
(1-p)^{\,2\binom s2-\binom j2}.
\]
Consequently,
\[
\frac{\bar\Delta}{\mu^2}
=\sum_{j=2}^{s}
\frac{\binom sj\binom{m-s}{s-j}}{\binom ms}
(1-p)^{-\binom j2}
\le C_{s,\varepsilon}
\sum_{j=2}^{s}\frac1{n^j(1-p)^{\binom j2}}
=o(1/n).
\]
Here the last estimate follows from
$n^{j-1}(1-p)^{\binom j2}\to\infty$ for every
$2\le j\le s$, since $(1-p)n^{2/s}\to\infty$.
By Janson's inequality and a union bound over all
$m$-vertex sets, for sufficiently large $n$,
\[
\Prob\bigl(\exists U\subseteq[n]:\ |U|=m,
\ \alpha(G[U])<s\bigr)
\le 2^n e^{-2n}=o(1).
\]
Every vertex set of size at least $\varepsilon n$
contains an $m$-vertex set, proving the second assertion.
\end{proof}

\subsection{Combining the two regimes}

\begin{proof}[Proof of Theorem~\ref{thm:main}]
Let $G=G(n,p)$ and write $d=np$.
By the subsubsequence principle, we may assume that
\[
\frac{2\log d}{\log(1/(1-p))}
\longrightarrow a\in[0,\infty].
\]
If $a=\infty$, Proposition~\ref{prop:growing}
applies. Henceforth assume $a<\infty$.

\smallskip
\noindent\textbf{Step 1: Choosing the colour-class bound.}
Since $d\to\infty$ and $a<\infty$, we have
$p\to1$ and $d=(1+o(1))n$.
Set $s=\max\{1,\lfloor a\rfloor-1\}$.
Then $a<s+2$, and $a>s$ if $s\ge2$.
For every fixed integer $t\ge2$,
\[
\log((1-p)n^{2/t})
=\left(\frac at-1+o(1)\right)
\log\frac1{1-p}.
\]
Since $\log(1/(1-p))\to\infty$, we obtain
\begin{equation}\label{eq:window}
\begin{aligned}
&(1-p)n^{2/(s+2)}\to0,\ \ \ \ \text{and if }s\ge2,\quad
(1-p)n^{2/s}\to\infty.
\end{aligned}
\end{equation}

\smallskip
\noindent\textbf{Step 2: Removing exceptional vertices.}
Let $Z$ be the set of vertices belonging to an independent
set of size $s+2$ in $G$, and put $G_0=G-Z$.
By Lemma~\ref{lem:random-independent-sets} and the first
limit in~\eqref{eq:window}, $|Z|=o(n)$ with high probability.
By construction, $\alpha(G_0)\le s+1$, and
$|V(G_0)|=n-o(n)$ with high probability.

For every fixed $\gamma>0$, the second limit and Lemma~\ref{lem:random-independent-sets} imply that, with high probability, every vertex set
of $G$ of size at least $\gamma n$ contains an independent
set of size $s$ when $s\ge2$.
For $s=1$, every nonempty vertex set already contains
an independent set of size one.

\smallskip
\noindent\textbf{Step 3: Completing the colouring.}
Fix $\varepsilon>0$, and let $\delta>0$ be given by
Theorem~\ref{prop:lifting} for $s$ and $\varepsilon$.
Step~2 applied with $\gamma=\delta/2$ ensures that,
with high probability, every vertex set of $G_0$ of size
greater than $\delta|V(G_0)|$ contains an independent
set of size $s$.
An optimal colouring of $G_0$ has $\chi(G_0)$ colour
classes, each of size at most $s+1$.
Thus Theorem~\ref{prop:lifting} gives
$\ch(G_0)\le\chi(G_0)+\varepsilon|V(G_0)|
\le\chi(G_0)+\varepsilon n$
with high probability.
As $\varepsilon$ is arbitrary,
$\ch(G_0)\le\chi(G_0)+o(n)$ with high probability.

To colour $G$, first assign distinct allowed colours
to the vertices of $Z$ and delete these colours
from all remaining lists.
Starting with lists of size $|Z|+\ch(G_0)$ leaves
at least $\ch(G_0)$ colours at every vertex of $G_0$.
Consequently,
\[
\ch(G)\le |Z|+\ch(G_0)\le\chi(G)+o(n)
\]
with high probability.

Since $\alpha(G_0)\le s+1$ and $|Z|=o(n)$,
\[
\chi(G)\ge\chi(G_0)\ge\frac{n-|Z|}{s+1}
=\Omega(n).
\]
Thus the additive error $o(n)$ is also
$o(\chi(G))$, completing the proof.
\end{proof}

\section*{Acknowledgements} Lanchao Wang was supported by the NSFC under grant number 12471327, the
National Key R\&D Program of China under grant number 2024YFA1013900,
the China Scholarship Council, and the Institute for Basic Science
(IBS-R029-C4). 
Jiasheng Zeng acknowledges support from the Research Grants Council of Hong Kong through the Hong Kong PhD Fellowship Scheme.

In this work, ChatGPT assisted with mathematical derivations, discussions, and the preparation of an early draft. The proof of the large-independent-set case was developed autonomously by GPT-5.6. The overall proof strategy for the bounded-independent-set case, particularly the colouring method, was developed by the authors, with GPT-6 assisting in working out the technical details.
The final manuscript was written by the authors,
who take full responsibility.

\end{document}